\documentclass[11pt]{amsart}

\usepackage{amscd,amsthm,amsfonts,amssymb,amsmath,esint}
\usepackage{mathrsfs}
\usepackage[colorlinks=true]{hyperref}
\usepackage[all]{xy}
\usepackage{CJK}

    \newcommand{\BA}{{\mathbb {A}}} \newcommand{\BB}{{\mathbb {B}}}
    \newcommand{\BC}{{\mathbb {C}}} 
     \newcommand{\BF}{{\mathbb {F}}}
     \newcommand{\BH}{{\mathbb {H}}}
     
     \newcommand{\BL}{{\mathbb {L}}}
     \newcommand{\BN}{{\mathbb {N}}}
     
    \newcommand{\BQ}{{\mathbb {Q}}} \newcommand{\BR}{{\mathbb {R}}}

     \newcommand{\BZ}{{\mathbb {Z}}}

    \newcommand{\CO}{{\mathcal {O}}} 
     \newcommand{\CR}{{\mathcal {R}}}
    \newcommand{\CS}{{\mathcal {S}}}

    \newcommand{\fa}{{\mathfrak{a}}}

    \newcommand{\fm}{{\mathfrak{m}}} 
     \newcommand{\fp}{{\mathfrak{p}}}
    \newcommand{\fq}{{\mathfrak{q}}}

    \newcommand{\sK}{\mathscr{K}}

\newcommand{\ab}{{\mathrm{ab}}}                     
\newcommand{\ad}{{\mathrm{ad}}}                     

\newcommand{\Aut}{{\mathrm{Aut}}}                   
\newcommand{\Char}{{\mathrm{Char}}}                 
\newcommand{\End}{{\mathrm{End}}}                   
\newcommand{\Gal}{{\mathrm{Gal}}}                   
\newcommand{\GL}{{\mathrm{GL}}}                     
\newcommand{\Hom}{{\mathrm{Hom}}}                   
\newcommand{\Ind}{{\mathrm{Ind}}}                   
\newcommand{\Norm}{{\mathrm{Norm}}}                 
\newcommand{\ord}{{\mathrm{ord}}}                   
\newcommand{\Pic}{\mathrm{Pic}}                     

\newcommand{\tor}{\mathrm{tor}}                     
\newcommand{\Tr}{\mathrm{Tr}}                       
\newcommand{\Vol}{{\mathrm{Vol}}}                   

\renewcommand{\mod}{\, \mathrm{mod}\, }

\font\cyr=wncyr10 \newcommand{\Sha}{\hbox{\cyr X}}

\newcommand{\sk}{\medskip}
\newcommand{\s}{\sk\noindent}
\newcommand{\bs}{\backslash}                        

\newcommand{\ov}{\overline}

\newcommand{\wh}{\widehat}

\newcommand{\ra}{\rightarrow}                           

\newcommand{\pair}[1]{\left\langle {#1}\right \rangle}             
\newcommand{\lrb}[1]{\left(#1\right)}                   
\newcommand{\set}[1]{\left\{#1\right\}}                 

\newcommand{\BetaI}[1]{{\left\{#1 \right\} }}

\newtheorem{thm}{Theorem}[section]

\newtheorem{theorem}[thm]{Theorem}

\newtheorem{coro}[thm]{Corollary}

\newtheorem{lem}[thm]{Lemma}
\newtheorem{lemma}[thm]{Lemma}

\newtheorem{prop}[thm]{Proposition}
\newtheorem{proposition}[thm]{Proposition}

\theoremstyle{definition}

\theoremstyle{remark}

\numberwithin{equation}{subsection}

\def\mat(#1,#2,#3,#4){\begin{pmatrix}#1 & #2 \\ #3 & #4\end{pmatrix}}
\newcommand{\matrixx}[4]{\begin{pmatrix}#1 & #2 \\ #3 & #4\end{pmatrix} }

\begin{document}
\title{Gross-Zagier formula for the $4, 7$ cases of Sylvester's conjecture}
\author{Hongbo Yin}

\begin{abstract}
In \cite{Yin26}, the author constructed some CM points on the elliptic curves $E_{p^i}:y^2=x^3+\frac{p^{2i}}{4}$ for primes $p\equiv 4,7\mod 9$ and $i=1,2$, which give rational points on the curves $x^3+y^3=p^i$. This solves the $4,7$ cases of Sylvester's conjecture. In this paper, we prove the explicit Gross-Zagier formula relating the height of our CM points and the derivative of the $L$-functions of $E_{p^i}$. 
\end{abstract}

\address{School of Mathematics, Shandong University, Jinan, Shandong,  250100,
China}
\email{yhb@sdu.edu.cn}
\maketitle

\section{Introduction}
The Sylvester conjecture states that if $p\equiv 4,7,8\mod 9$, then $p$ should be a sum of two rational cubes, i.e. the equation $x^3+y^3=p$ has non-trivial rational solutions. This is in contrast to the classical result of Sylvester in 1879 that if $p\equiv 2,5\mod 9$, then $p$ cannot be the sum of two rational cubes. In 1993, Elkies claimed a proof of this conjecture for primes $p\equiv 4,7\mod 9$ \cite{Elkies}. But he had never published any details about his proof. In 2006, Dasgupta and Voight proved that if $p\equiv 4,7\mod 9$ and $3\mod p$ is not a cube, then $p$ is a cube sum, see \cite{DV1}\cite{DV17}. In \cite{HSY19}, the authors established the explicit Gross-Zagier formula for Dasgupta-Voight's CM points. This formula explains the reasons why their points work and why they should restrict to the condition that $3\mod p$ is not a cube. Recently, the author gave a proof of the $4,7$ case of Sylvester's conjecture by constructing some non-torsion CM points in \cite{Yin26}. In this paper, we will give the explicit Gross-Zagier formula for our new CM points.

Let us describe our CM point briefly and state the explicit Gross-Zagier formula. In this paper, we assume $i=1,2$. 
For convenience, we will use the elliptic curve $E_{p^i}:y^2=x^3+\frac{p^{2i}}{4}$ which is isogenious to $y^2=x^3-432p^{2i}$ over $\BQ$. Let $K=\BQ(\sqrt{-3})$ be the CM field of $E_{p^i}$ and fix a splitting of $p=\varpi\bar\varpi$ with $\varpi\equiv 1\mod 3$ in $K$. By a theorem of Shimura, we have a modular parametrization 
$$\varphi: X_1(N)\longrightarrow E_{\bar{\varpi}^i}:y^2=x^3+\frac{\bar{\varpi}^{2i}}{4}$$
where $N=9p$ or $27p$ depending on $p^i\equiv 4$ or $7$ modulo $9$. 
We also consider the map
\[\phi: E_{\bar\varpi^i}\longrightarrow E_{p^i}\]
given by
\[(x,y)\mapsto (\sqrt[3]{\varpi^{2i}}x, \varpi^i y).\]

Let $r\in\BZ$ be a solution of $r^2-r+1\equiv 0\mod 3p$ such that $-r\equiv \omega^2\mod\varpi$ where $\omega=-\frac{1}{2}+\frac{\sqrt{-3}}{2}$ is a cubic root of unity. Let $\tau_r=\frac{-1}{3(\omega+r)}$ be the CM point on the upper half plane, then by the result of \cite{Yin26}, $\phi\circ\varphi(\tau_r)\in E_{p^i}(K)$ is a non-torsion point. Let $\Omega_{p^i}$ be the Neron periods of $E_{p^i}$ and $\hat{h}_\BQ$ be the Neron-Tate height with base field $\BQ$ (see \cite[Section 7.1.1]{YZZ} or \cite[Chapter VIII]{Silvermanbook1}). Note that $\hat{h}_\BQ$ is the height used in the BSD conjecture for elliptic curves over $\BQ$. In this paper, we prove the following theorem.
\begin{theorem}\label{main}
We have 
\begin{equation*}
\frac{L'(E_{p^i},1)}{\Omega_{p^i}}=2^{\delta(p^i)}\hat{h}_\BQ(\phi\circ\varphi(\tau_r))
\end{equation*} 
where \[\delta(p^i)=\begin{cases}0,& p^i\equiv 4\mod 9,\\ -1,& p^i\equiv 7\mod 9.\end{cases}\]
\end{theorem}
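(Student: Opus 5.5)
The plan is to derive the identity from the explicit Gross--Zagier formula of Cai--Shu--Tian, in the form used in \cite{HSY19} for the Dasgupta--Voight points. The formula is applied to the automorphic representation $\pi$ of $\GL_2(\BA)$ attached to $E_{p^i}$ (equivalently to $E_{\bar\varpi^i}$, which is a twist of $E_{p^i}$ over $K$). The quadratic extension is $K=\BQ(\sqrt{-3})$, and the character $\chi$ is the cubic Hecke character of $K$ cut out by $\sqrt[3]{\varpi^{2i}}$, which is exactly the field of definition of $\phi$.

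The first step is to rewrite the point. Since $\phi$ is defined over $K(\sqrt[3]{\varpi^{2i}})$, we have $\phi\circ\varphi(\tau_r)\in E_{p^i}(K)$. This point should equal a $\chi$-isotypic combination of $\varphi(\tau_r)$ and its Galois conjugates, up to the twisting isomorphism. So define
\[P_\chi(f)=\sum_{t\in K^\times\bs\wh K^\times/\wh\CO_c^\times}\varphi(t\cdot\tau_r)\chi(t),\]
with $f=\varphi$ viewed as an element of $\Hom_{\Gal}(J_1(N),E)$, on a Shimura curve that is here simply $X_1(N)$, $N=9p$ or $27p$.

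The formula then reads
\[L'(1,E_{p^i})L(1,E_{p^i},\chi\text{-twist part}) \;\leftrightarrow\; L'(1,\pi,\chi)=\frac{8\pi^2(\phi,\phi)}{u^2\sqrt{|D_Kc^2|}}\,\frac{\hat h_K(P_\chi(f))}{(f,f)}\prod_v\beta(f_v,f_v)^{-1}.\]
Here $L(s,\pi,\chi)=L(s,E_{p^i})$ up to the factorization $L(s,\pi_K\otimes\chi)=L(s,E_{p^i})L(s,E_{p^i}\otimes\eta)$, or it coincides directly with $L(s,E_{p^i})$ because $E_{p^i}$ is CM by $K$. I expect the latter, since $L(E_{p^i},s)$ equals the Hecke $L$-function of $\bar\chi\psi$, and the Rankin--Selberg $L$-function then splits off a factor that is nonvanishing at $s=1$ and computable as a period ratio, in the style of \cite{HSY19}.

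The second step is to compute all the ingredients.
\begin{itemize}
\item The Petersson norm $(\phi,\phi)$ of the newform is converted into $\Omega_{p^i}$ times the degree of $\varphi$, via $(f,f)$ and the Manin constant.
\item The local factors $\beta_v$ at $v=2$, at $v=3$ (where the conductor is $9$ or $27$ and $K/\BQ$ is ramified), and at $v=p$ (where $p$ is split and both $\pi_p$ and $\chi_p$ are ramified) are computed from the explicit test vectors. The test vector at $3$ and $p$ is determined by the choice of $\tau_r=-1/(3(\omega+r))$, whose $CM$ embedding has the prescribed local order by the congruences $r^2-r+1\equiv0\mod 3p$ and $-r\equiv\omega^2\mod\varpi$.
\item Finally, $\hat h_K=2\hat h_\BQ$ is used for a point in $E(K)$ fixed up to sign by complex conjugation.
\end{itemize}

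The main obstacle is the local computation at $3$. The conductor exponent $2$ or $3$, the wild ramification of $K$ at $3$, and the non-minimal new vector make the toric period $\beta_3$ delicate. This is precisely where the discrepancy $2^{\delta(p^i)}$ between the cases $p^i\equiv4$ and $p^i\equiv7\mod 9$ should appear. I would first compute $\beta_3$ through the Waldspurger local period $\int_{K_3^\times/\BQ_3^\times}\langle\pi(t)f_3,f_3\rangle\chi_3(t)\,dt$ using the explicit Kirillov model of the supercuspidal (or twisted principal series) $\pi_3$, comparing the two congruence classes. As a sanity check, I would verify the resulting constant against the Tamagawa numbers $c_3$ of $E_{p^i}$, which differ between the two cases, and against the BSD prediction with $\Sha$ trivial for small $p$.
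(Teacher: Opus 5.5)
Your overall plan matches the paper's. You apply the explicit formula of \cite{CST14} and compute the Heegner height, the Petersson norm, the admissible-order volumes and the local toric periods at $3$ and $p$, with $2^{\delta(p^i)}$ emerging from the place $3$. But the global input you feed into it is wrong, so the constant cannot come out.

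\textbf{The representation.} You apply the formula to the representation attached to $E_{p^i}$ and call this equivalent to using $E_{\bar\varpi^i}$. It is not. The map $\varphi$ parametrizes $E_{\bar\varpi^i}$, a curve defined only over $K$, by $X_1(N)$ with $N=9p$ or $27p$. By contrast, $E_{p^i}$ has additive reduction at $p$, so its conductor is divisible by $p^2$. The object over $\BQ$ that the formula sees is the $\GL_2$-type abelian surface $A=\mathrm{Res}_{K/\BQ}E_{\bar\varpi^i}$ with $\End^0_\BQ(A)=K$. The representation $\pi$ is that of the CM newform $f$ built from the Hecke character $\psi$ of $E_{\varpi^i}$.

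This $\pi$ is not self-dual: $w_\pi=\chi^{-1}|_{\BA^\times}\neq 1$. That is exactly what makes the cubic character $\chi(\sigma)=\sigma(\sqrt[3]{\varpi^i})/\sqrt[3]{\varpi^i}$ admissible. The Yuan--Zhang--Zhang setting requires $w_\pi\cdot\chi|_{\BA^\times}=1$, and $\chi$ is nontrivial on $\BA^\times$. With the self-dual representation of $E_{p^i}$ this condition fails. For the same reason, $(\varphi_\BQ,\varphi_\BQ)=\deg\varphi$ has to be computed on $A$ through the Weil restriction adjunction, $\varphi_\BQ\circ\varphi_\BQ^\vee=[\deg\varphi]$.

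\textbf{The $L$-function identity.} You leave this undecided, but it is the crux, and neither of your candidates is correct. Comparing Euler factors gives
$L(s-\tfrac12,\pi_K,\chi)=L(s-\tfrac12,\chi\Theta)\,L(s-\tfrac12,\chi\Theta^c)=L(E_1,s)\,L(E_{p^i},s)$, with $E_1:y^2=x^3+\tfrac14$.

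Your first option, $L(E_{p^i},s)L(E_{p^i}\otimes\eta,s)$, equals $L(E_{p^i},s)^2$ because $E_{p^i}$ has CM by $K$. It would vanish to order at least two at $s=1$, and the formula would force the height to be $0$. Your second option is a degree mismatch.

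You do sense that a factor nonvanishing at $s=1$ splits off, but everything depends on identifying it. The Petersson norm has to be expressed through $\Omega_1\Omega_{p^i}$, via $\Vol(E_{\bar\varpi^i})^2=\Vol(E_1)\Vol(E_{p^i})$, not through $\Omega_{p^i}$ alone. The final step then uses the explicit value $L(E_1,1)/\Omega_1=1/9$ to cancel $\Omega_1$ and the factor $\tfrac19$ on the height side.

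\textbf{The height step.} This is not just $\hat h_K=2\hat h_\BQ$. You need the Galois action $\sigma'(\varphi(\tau_r))=[\omega^{\pm1}]\varphi(\tau_r)$ from Theorem~\ref{non-torsion}. It gives $\langle\varphi(\tau_r),\varphi(\tau_r)^{\sigma'}\rangle_{K,K}=-\tfrac12\hat h_K(\varphi(\tau_r))$, and hence $\langle P^0_\chi,P^0_{\chi^{-1}}\rangle_{K,K}=\tfrac{(p-1)^2}{9}\hat h_\BQ(\phi\circ\varphi(\tau_r))$.

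There is also no local factor at $2$, since both $\pi$ and $\chi$ are unramified there.
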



Our explicit Gross-Zagier formula is very different from previous examples. In fact, we work on the elliptic curves $E_{\bar\varpi}$ which are defined over $K$ not $\BQ$. In this case, the corresponding automorphic representations are not self-dual any more. They correspond to abelian varieties of $\GL_2$-type other than elliptic curves. In previous examples, the modular parametrization is fixed while in our case, the modular parametrization is varied, so the existence of a uniform formula is surprising. The computation of local period integral in this paper also combines all methods developed so far (including the Whittaker model, the Kirillov model and the minimal vector method) to achieve the goal.  

This paper is organized as follows. In section 2, we describe our CM points. In section 3, we prove Theorem \ref{main} assuming some local period integral computations which are delayed to section 4. In section 4, we supply the local period integral computation which is used in section 3.

\subsection*{Acknowledgments} The author would like to thank Xinyi Yuan, Bin Guan, and Jianing Li for many useful discussions. 


\section{CM Point}
Recall the construction of the CM point in this section. For more details please consult \cite{Yin26}.
Let
$$E_{\bar\varpi^i}:y^2=x^3+\frac{\bar\varpi^{2i}}{4}.$$ 
and
$$E_{\varpi^i}:y^2=x^3+\frac{\varpi^{2i}}{4}.$$ 
Let $\psi$, resp. $\psi^c$ be the Hecke character of $E_{\varpi^i}$, resp. $E_{\bar\varpi^i}$, then 
$$\psi(\fq)=\ov{\lrb{\frac{\varpi^i}{\varpi_\fq}}_3}\varpi_\fq$$ with $\fq\nmid 3\varpi$ and $\varpi_\fq\equiv 1\mod 3$. 
Let $\fm$ be the conductor of $\psi$, then 
\begin{equation}
\fm=\begin{cases} (\sqrt{-3})(\varpi),&\text{if}\ \ p^i\equiv 4\mod 9;\\   (\sqrt{-3})^2(\varpi),& \text{if}\ \ p^i\equiv 7\mod 9. \end{cases}
\end{equation}
By \cite{Shimura71} (see also \cite{NM1}), the functions
\begin{equation}\label{def}
f(\tau)=\sum_{\substack{\fa\in I_K(\fm)\\ \fa\ \mathrm{integral}}} \psi(\fa)e^{2\pi i N(\fa)\tau}
\end{equation}
and
\begin{equation}
f^c(\tau)=\sum_{\substack{\fa\in I_K(\bar\fm)\\ \fa\ \mathrm{integral}}} \psi^c(\fa)e^{2\pi i N(\fa)\tau}
\end{equation}
are modular forms on $\Gamma_0(N)$ with Nebentypus character $\xi(d)=\lrb{\frac{-3}{d}}\frac{\psi(d)}{d}$ and $\xi^c(d)=\lrb{\frac{-3}{d}}\frac{\psi^c(d)}{d}=\bar{\xi}(d)$, where 
\begin{equation}\label{conductor}
N=3\Norm(\fm)=\begin{cases} 9p,&\text{if}\ \ p^i\equiv 4\mod 9;\\   27p,& \text{if}\ \ p^i\equiv 7\mod 9. \end{cases}
\end{equation} 
and $I_K(\fm)$ is the group of fractional ideals in $K$ prime to $\fm$.  $f$ is a modular form for the congruence subgroup
\begin{equation}\label{cg}
\Gamma=\left\{\matrixx{a}{b}{c}{d} \in\Gamma_0(N)\mid d\equiv e^3\mod p\ \text{for some}\ e\in\BZ\right\}.
\end{equation}
If we write $f(\tau)=\sum_{n\geq 1}a_n e^{2\pi i n \tau}$ then it can be checked that $f^c(\tau)=\sum_{n\geq 1}\bar{a}_n e^{2\pi i n \tau}$ although $\psi^c\neq\bar\psi$. Also from (\ref{def}), we can see that $a_n\neq 0$ if and only if $n\equiv 1\mod 3$.

Let $I_{f}$ be the annihilator of $f$ in the Hecke algebra which acts on the Jacobian $J_1(N)$. 
By \cite{Shimura71} and \cite{Shimura73} (see also \cite{GL}), $A_{f}=J_1(N)/I_{f}(J_1(N))$ is a two dimensional abelian variety defined over $\BQ$. It splits into $E_{\bar\varpi^i}\times E_{\varpi^i}$ over $K$ by the result in \cite{GL}. The pullback of $\Omega^1(A_{f})$ is the space spanned by $\set{f,f^c}$. But beyond expectation, by \cite[Theorem 1.1(iv)]{GL}, the pullback of the invariant differential of $E_{\bar\varpi^i}$ corresponds to $f$ (rather than $f^c$). By \cite[section 3]{Shimura73} and \cite[Theorem 1]{GL}, $E_{\bar\varpi^i}$ can be parameterized by $X_1(N)$ over $K$ through the integral of $f(z)$, i.e.
\[\varphi: \xymatrix{X_1(N)\ar[r]&J_1(N)\ar[r]& E_{\bar\varpi}}\]
with
\begin{equation}\label{modularpara}
\varphi: t\mapsto z_t=2\pi i\s\int_{i\infty}^t f(z)dz\mapsto \lrb{\wp_L(z_t), \frac{1}{2}\wp'_L(z_t)}.
\end{equation}
Here 
\begin{equation}\label{L}
L=\set{2\pi i \int_{i\infty}^{\gamma \infty}f(\tau) d\tau\mid \gamma\in\Gamma_1(N)}
\end{equation}
is the period lattice of $f$ and in the parametrization we use the fact that the Manin-Stevens constant of $E_{\bar\varpi^i}$ is a unit which is proved in \cite{Yin26}. We can similarly define $\varphi^c,\phi^c$ for $E_{\varpi}$.

Let
\[M=\matrixx{0}{-1}{3}{3r},\]
where $r\in \BZ$ such that $r^2-r+1\equiv 0\mod 3p$ and $-r\equiv \omega^2\mod\varpi$. We define the CM point to be $$\tau_r=M\omega=\frac{-1}{3\omega+3r}.$$ Then we have the normalized embedding $\iota_1$ of $K$ into the $2\times 2$ matrix algebra $M_2(\BQ)$ in the sense of \cite{Shimurabook} such that

\begin{equation}\label{embedding}
\iota_1(\omega)=M\matrixx{-1}{-1}{1}{0}M^{-1}=\matrixx{-r}{-1/3}{3(r^2-r+1)}{r-1},
\end{equation}
then
\[\iota_1(\omega^2)=\matrixx{r-1}{1/3}{-3(r^2-r+1)}{-r}.\]
Let $\sigma: \wh{K}^\times\ra \Gal(K^\ab/K)$ be the Artin reciprocity law and we denote by $\sigma_t$ the image of $t\in\wh{K}^\times$. The following results are proved in \cite{Yin26}. 
\begin{prop}\label{LCF}
Let the notation be as above, we have
\begin{itemize}
\item[1.] The field $K(\sqrt[3]{\varpi})$ is contained in $H_{3\varpi}$.
\item[2.] We have 
\[\left(\sqrt[3]{\varpi}\right)^{\sigma_{\omega_\fp}-1}=\left\{\begin{aligned} {\omega},&\quad p\equiv 4\mod 9, \\{\omega^2},&\quad p\equiv 7\mod 9.  \end{aligned}\right.\]
So, $\Gal(K(\sqrt[3]{\varpi})/K)=\pair{\sigma_{\omega_\fp}}$.
\end{itemize}
\end{prop}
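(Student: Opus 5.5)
The plan is to treat $K(\sqrt[3]{\varpi})/K$ as a cyclic Kummer extension of degree $3$ and compute everything with local Hilbert symbols. This works because $K$ contains the cube roots of unity. I read $\omega_\fp$ as the idele of $K$ whose component at $\fp=(\varpi)$ is $\omega$ and whose other components are $1$; part 2 below is consistent only with this reading. Throughout, $\lambda=\sqrt{-3}$ denotes the unique prime of $K$ above $3$.

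\textbf{Part 1.} I would bound the conductor of $K(\sqrt[3]{\varpi})/K$ place by place.
\begin{itemize}
\item Away from $3\varpi$, the element $\varpi$ is a unit, so the extension is unramified there.
\item At $(\varpi)$, $\varpi$ is a uniformizer and the residue characteristic $p\neq 3$, so the ramification is tame and the conductor exponent is $1$.
\item At $\lambda$, I use the standard conductor formula for degree-$3$ Kummer extensions. If $a=1+u$ with $v_\lambda(u)=k<3e/2=3$ and $3\nmid k$, the conductor exponent is $3e/2-k+1=4-k$.
\end{itemize}
The normalization $\varpi\equiv 1\bmod 3$ gives $k=v_\lambda(\varpi-1)\geq 2$. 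If $k=2$, the exponent is $2$. If $k\ge 3$, then $\varpi$ is a cube times a unit congruent to $1$ to higher order, and the exponent is at most $2$ (in fact $0$). In either case the conductor divides $\lambda^2(\varpi)=(3\varpi)$. Class field theory then places $K(\sqrt[3]{\varpi})$ inside the ray class field $H_{3\varpi}$. This local conductor computation at $\lambda$ is the only step needing care. If the formula is in doubt, I would verify it directly by checking that $U^{(2)}_\lambda=1+3\CO_\lambda$ consists of local norms from $K_\lambda(\sqrt[3]{\varpi})$.

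\textbf{Part 2.} For an idele $t$, the Artin action on a cube root is given by a product of Hilbert symbols:
\[
\left(\sqrt[3]{\varpi}\right)^{\sigma_t-1}=\prod_v (t_v,\varpi)_v^{\pm1},
\]
where the sign depends on the normalization of reciprocity. For $t=\omega_\fp$ only the place $\fp=(\varpi)$ contributes. Since $\omega$ is a unit and $\varpi$ is a uniformizer there, the symbol is tame. It equals the cubic residue symbol $\left(\frac{\omega}{\varpi}\right)_3$, which is the cube root of unity congruent to $\omega^{(p-1)/3}$ modulo $\varpi$.

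Now compute $(p-1)/3$ modulo $3$:
\begin{itemize}
\item if $p\equiv 4\bmod 9$, then $(p-1)/3\equiv 1\bmod 3$, giving $\omega$;
\item if $p\equiv 7\bmod 9$, then $(p-1)/3\equiv 2\bmod 3$, giving $\omega^2$.
\end{itemize}
I would fix the sign convention of the reciprocity map so that it matches the statement. Because the conventions in \cite{Yin26} are not reproduced here, this is a bookkeeping point that must be checked against that source.

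Finally, $\sigma_{\omega_\fp}$ moves $\sqrt[3]{\varpi}$, so it is a nontrivial element of the cyclic group $\Gal(K(\sqrt[3]{\varpi})/K)$ of order $3$. Hence it generates the group.
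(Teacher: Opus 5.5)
Your Part 1 is fine: the extension is tame of exponent $1$ at $(\varpi)$, and the Kummer formula gives exponent $4-k=2$ at $\lambda$. In fact for $p\equiv 4,7\pmod 9$ one always has $v_\lambda(\varpi-1)=2$. Indeed, $\varpi=1+3(s+t\omega)$ gives $p\equiv 1+3(2s-t)\pmod 9$, and $2s-t\equiv -(s+t)\pmod 3$, so your $k\ge 3$ branch never occurs. Your deduction that $\sigma_{\omega_\fp}$ generates once it acts nontrivially is also fine.

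The gap is in Part 2. The entire content of the displayed formula is the sign of the exponent $\pm(p-1)/3$, and that is exactly what you leave to be ``fixed so that it matches the statement.'' The normalization of $\sigma$ is not yours to choose. The paper fixes it by $\chi(\fq)=\lrb{\frac{\varpi^i}{\fq}}_3$, i.e. uniformizers go to arithmetic Frobenius, which is the convention of \cite{Neukirchbook1} used in Lemma \ref{character}. In that normalization your identification of the tame symbol is off by an inverse. Neukirch's formula $\bigl(\frac{a,b}{\fp}\bigr)=\bigl((-1)^{\alpha\beta}b^{\alpha}/a^{\beta}\bigr)^{(q-1)/n}$, with $a=\omega$ a unit and $b=\varpi$ a uniformizer, gives $\omega^{-(p-1)/3}=\lrb{\frac{\omega}{\varpi}}_3^{-1}$, not $\lrb{\frac{\omega}{\varpi}}_3$.

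You can confirm this globally using your own Part 1. Choose $k\in\BZ$ with $k\equiv\omega^{-1}\pmod{\varpi}$ and $k\equiv 1\pmod 3$. Then $\sigma_{\omega_\fp}=\sigma_{k\omega_\fp}$ acts on $K(\sqrt[3]{\varpi})$ as $\mathrm{Frob}_{(k)}$. By cubic reciprocity this is multiplication by $\lrb{\frac{\varpi}{k}}_3=\lrb{\frac{k}{\varpi}}_3=\lrb{\frac{\omega}{\varpi}}_3^{-1}$. For $p=13$, $\varpi=1-3\omega$, $k=16$, this equals $\lrb{\frac{\varpi}{2}}_3^{4}=\omega^{2}$, whereas the display predicts $\omega$ for $p\equiv 4\pmod 9$. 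The same inverse is visible in the paper's Lemma \ref{character}, since $\chi_\varpi=\Theta_\varpi^{-1}$ on $\CO_\varpi^\times$.

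So your argument, carried out in the paper's normalization, gives $\omega^2$ for $p\equiv 4$ and $\omega$ for $p\equiv 7\pmod 9$. This is the reverse of the display, which holds only for the geometric normalization of $\sigma$ (or if $\omega_\fp$ is read as having component $\omega^{-1}$). To finish, you must determine the normalization of $\sigma$ actually used in \cite{Yin26} and compute in it, rather than choose the convention after the fact. Note that Section \ref{H} uses only Part 1 and the generation statement, and those you have established.
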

\begin{theorem}\label{non-torsion}
Let $r$ be an integer such that $r^2-r+1\equiv 0\mod 3p$. If $-r\equiv \omega^2\mod\varpi$, 
then $\varphi(\tau_r)$ is defined over $K(\sqrt[3]{\varpi})$ and 
\[\sigma_{\omega_\fp}(\varphi(\tau_r))=\begin{cases}[\omega^2]\varphi(\tau_r),& \text{if}\ \ N=9p;\\ [\omega]\varphi(\tau_r),& \text{if}\ \ N=27p, \end{cases}\]
Moreover, $\phi\circ\varphi(\tau_r)\in E_{p^i}(K)$ is non-torsion.  
\end{theorem}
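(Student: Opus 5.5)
The plan is to prove Theorem \ref{non-torsion} with Shimura's reciprocity law on $X_1(N)$, combined with the Galois equivariance of $\varphi$ over $K$, and then treat non-torsion separately.

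\textbf{Field of definition and Galois action.} For $t\in\wh K^\times$, Shimura's reciprocity law says $\sigma_t$ sends the CM point $[\tau_r,1]$ to $[\tau_r,\iota_1(t)^{-1}]$ in the adelic description of $X_1(N)$. Here $\iota_1$ is the normalized embedding (\ref{embedding}).
\begin{itemize}
\item[1.] First compute the subgroup of $\wh K^\times$ fixing $\tau_r$ at the level of $\Gamma$ from (\ref{cg}). Since $\iota_1(\widehat{\CO}_K)\subset \widehat{R}$, with $R$ the Eichler order of level $N$ attached to $M$ (this uses $r^2-r+1\equiv 0\mod 3p$), the local factors away from $3p$ act trivially.
\item[2.] At $p$, the element $\iota_1(t_p)$ lies in $\Gamma_0(p)$-level with lower-right entry congruent to $t$ modulo $\varpi$. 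This is where the normalization $-r\equiv\omega^2\mod\varpi$ enters.
\item[3.] At $3$, one checks which units of $\CO_{K,3}$ land in the level structure.
\item[4.] Consequently $\tau_r$, viewed on $X_\Gamma$, is defined over the ray class field $H_{3\varpi}$. On $X_1(N)$ the residual action is through the diamond operators $\pair{d}$, with $d$ the lower-right entry of $\iota_1(t)$.
\end{itemize}

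Next, $\varphi$ is defined over $K$, and it intertwines $\pair{d}$ with multiplication by $\xi(d)$ on $f$. Since $f$ pulls back the invariant differential of $E_{\bar\varpi^i}$, the diamond operator $\pair{d}$ becomes the CM automorphism $[\xi(d)]$ of $E_{\bar\varpi^i}$, up to the complex conjugation issue flagged from \cite{GL}. The value $\xi(d)=\lrb{\frac{-3}{d}}\psi(d)/d$ is a cube root of unity, essentially $\ov{\lrb{\varpi^i/d}_3}$.

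Hence $\varphi(\tau_r)$ is fixed by the kernel of $t\mapsto \xi(d(t))$, and this kernel cuts out $K(\sqrt[3]{\varpi})$ by Proposition \ref{LCF}(1). For $t=\omega_\fp$ one evaluates $\xi$ explicitly and obtains $[\omega^2]$ or $[\omega]$ according to $N=9p$ or $27p$. Equivalently, this depends on $p\equiv 4$ or $7\mod 9$, matching the two cases of Proposition \ref{LCF}(2).

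\textbf{Descent to $K$.} The map $\phi(x,y)=(\sqrt[3]{\varpi^{2i}}x,\varpi^iy)$ is defined over $K(\sqrt[3]{\varpi})$. By Proposition \ref{LCF}(2), $\sigma_{\omega_\fp}$ multiplies $\sqrt[3]{\varpi^{2i}}$ by a cube root of unity. Conjugating $\phi$ by $\sigma_{\omega_\fp}$ therefore gives $\phi\circ[\zeta]$ for an explicit $\zeta$.

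Combining with the previous step,
\[\sigma_{\omega_\fp}(\phi\circ\varphi(\tau_r))=\phi\circ[\zeta][\omega^{\pm1}]\varphi(\tau_r).\]
A case check in $i\in\{1,2\}$ and $p\mod 9$ should show $\zeta\omega^{\pm 1}=1$. Then $\phi\circ\varphi(\tau_r)$ is fixed by $\Gal(K(\sqrt[3]{\varpi})/K)=\pair{\sigma_{\omega_\fp}}$ and lies in $E_{p^i}(K)$.

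\textbf{Non-torsion (main obstacle).} First, $E_{p^i}(K)_{\tor}$ is tiny and explicitly computable, consisting of at most $2$- and $3$-torsion for $y^2=x^3+p^{2i}/4$. So it suffices to show the point is not one of these few points. I would first try a local argument at a prime above $3$ (or above $p$): reduce the CM point and use the explicit $q$-expansion of $z_{\tau_r}=2\pi i\int_{i\infty}^{\tau_r}f$ to show that $z_{\tau_r}\notin \frac{1}{6}L$.

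A more robust alternative is to show the point is nonzero in $E_{p^i}(K)/(1-\omega)E_{p^i}(K)$, via a $\sqrt{-3}$-descent and the $3$-adic valuation of the $x$-coordinate. This parallels how Dasgupta–Voight and \cite{HSY19} detect non-triviality. The key input there is that $\tau_r$ is not fixed by the extra automorphisms, which again follows from $\sigma_{\omega_\fp}$ acting nontrivially on $\varphi(\tau_r)$.

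This last step is where I expect the real difficulty, since one must rule out torsion without yet having the Gross–Zagier formula of Theorem \ref{main}.
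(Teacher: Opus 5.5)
The paper does not prove Theorem~\ref{non-torsion}; it quotes it from \cite{Yin26}. Your proposal therefore has to stand on its own, and its decisive part is missing: you give no argument that $\phi\circ\varphi(\tau_r)$ is non-torsion, and neither route you sketch works as stated.

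\textbf{Non-torsion.} The relation $\sigma_{\omega_\fp}P=[\omega^{\pm1}]P$ for $P=\varphi(\tau_r)$ says nothing about torsion. The $K$-rational points $O$ and $(0,\pm\bar\varpi^i/2)$ of $E_{\bar\varpi^i}$ are fixed by both $\sigma_{\omega_\fp}$ and $[\omega]$, so they satisfy the same relation. Hence ``$\sigma_{\omega_\fp}$ acts nontrivially on $\varphi(\tau_r)$'' is not an input you have; it presupposes what is to be proved.

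Non-vanishing in $E_{p^i}(K)/(1-\omega)E_{p^i}(K)$ is also insufficient, because the torsion point $T=(0,p^i/2)$ is itself nonzero there. Indeed, if $T=(1-\omega)Q$ with $Q\in E_{p^i}(K)$, then $-3\omega Q=(1-\omega)T=O$. So $Q\in E_{p^i}[3](K)=\{O,(0,\pm p^i/2)\}=E_{p^i}[1-\omega]$, which forces $T=O$. You would need the image of the point to avoid the image of $\langle T\rangle$, and that requires an explicit local Kummer or Selmer computation at $3$ or $p$, which you do not supply.

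The $q$-expansion route cannot be made uniform in $p$. We have $\mathrm{Im}\,\tau_r=\sqrt3/(6(r^2-r+1))\le\sqrt3/(18p)$, and even after the Atkin--Lehner involution the imaginary part is $O(1/p)$. So the series for $z_{\tau_r}$ gives at most numerical checks for individual $p$. A genuine arithmetic input is needed, for example a congruence for the reduction of the CM point, which in turn requires integrality of $\varphi$. None is provided.

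\textbf{Field of definition and Galois action.} This half uses the natural strategy and is essentially right. Your computation at $p$ is correct: $-r\equiv\omega^2$ gives $r-1\equiv\omega\bmod\varpi$, so the lower-right entry of $\iota_1(x)$ is $\equiv x\bmod\varpi$. Two points need repair.

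First, the inclusion you need fails at $3$. The matrix $\iota_1(\omega)$ has entry $-1/3$, so $\iota_1(K_3)\cap M_2(\BZ_3)=\iota_1(\BZ_3+3\CO_{K,3})$. Only the index-$3$ subgroup $\BZ_3^\times(1+3\CO_{K,3})\subset\CO_{K,3}^\times$ lands in $U_0(N)_3$. The argument survives for two reasons:
\begin{itemize}
\item[(a)] The nebentypus has trivial $3$-part: by cubic reciprocity $\xi(d)=\ov{\lrb{d/\varpi}_3}^{\,i}$, which depends only on $d\bmod p$.
\item[(b)] Since $h_K=1$, the diagonal unit $\omega$ covers the missing index-$3$ piece.
\end{itemize}
Only this shows that all of $\wh K^\times$ acts on $\varphi(\tau_r)$ through a cubic character equal to $\chi^{\pm1}$, which is what gives definition over $K(\sqrt[3]{\varpi})$.

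Second, the exponent $\pm1$ cannot be left ``up to conjugation'', because the descent of $\phi$ depends on it. Take the normalization $[\zeta](x,y)=(\zeta x,y)$ and Proposition~\ref{LCF}(2) as written. Then for $i=1$, $p\equiv4\bmod 9$, the stated action $[\omega^2]$ would give $\sigma_{\omega_\fp}(\phi(P))=(\omega\sqrt[3]{\varpi^2}\,x,\varpi y)\neq\phi(P)$. You must fix the direction of reciprocity and the identification $\End(E_{\bar\varpi^i})\cong\CO_K$ consistently before your case check in $i$ and $p\bmod 9$ means anything.
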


\section{Explicit Gross-Zagier formula}\label{EGZ}
In this section, we will give the explicit Gross-Zagier formula for our CM points which does not satisfy the Heegner hypothesis using the general formula in \cite{YZZ} and its explicit version in \cite{CST14}. This requires computing every factor in \cite[Theorem 1.6]{CST14} very clearly in our cases which is done in subsections \ref{H}-\ref{A} and section \ref{L}.

By \cite[P270]{NM1}, $\End^0_{\BQ}(A_f)=K$, so $A_f$ is of $\GL_2$-type, see also \cite[Proposition 15.1.5]{Ribet-Stein}. For simplicity's sake, we will use $A$ instead of $A_f$ from now on. Since $A$ is isogenous to $E_{\bar{\varpi}^i}\times E_{\varpi^i}$ over $K$ (see \cite[Theorem 1.1]{GL} also \cite[Lemma 4.3]{NM1}), we work directly with the abelian variety $A_{/\BQ}=\text{Res}_{K/\BQ}E_{\bar{\varpi}^i}=\text{Res}_{K/\BQ}E_{\bar{\varpi}^i}$. By \cite[Proposition 1]{CN}, $A$ is self-dual. Then we will identify $A$ with $A^\vee$ freely. Let $\pi$ be the automorphic representation of $\GL_2(\BQ)$ corresponding to the theta series $f$, then $\{\pi,\bar\pi\}$ form an orbit  under the action of $\Aut(\BC)$. Then by \cite[Theorem 3.4(2)]{YZZ}, there is a unique automorphic representation $\pi_f$ over $\BQ$ such that $\pi=\pi_f\otimes_{(K,\iota_1)}\BC$, $\bar\pi=\pi_f\otimes_{(K,\iota_2)}\BC$ where $\iota_1,\iota_2$ are the two different embeddings of $K$ into $\BC$ and $\pi_f\otimes_{\BQ}\BC=\pi\oplus \bar\pi$.  

Recall that $f$ is constructed from the Hecke character $\psi$ and $\psi(\fq)=\ov{\lrb{\frac{\varpi^i}{\fq}}_3}\varpi_\fq$ where $\varpi_\fq$ is a generator of the prime ideal $\fq$ such that $\varpi_\fq\equiv 1\mod 3$ for $\fq\nmid 3\varpi$. Let $\Theta$ be the unitarization of $\psi$, that is $\Theta(\fq)=\psi(\fq)(N\fq)^{-\frac{1}{2}}$. It is well known that $\pi$ is the Weil representation $\pi\lrb{\Ind_{W_K}^{W_\BQ}\Theta}$ described in \cite[P124]{Labesse79} with central character $w_\pi=\lrb{\frac{-3}{\cdot}}\Theta|_{\BA^\times}$ , where $\lrb{\frac{-3}{\cdot}}$ is the adelization of the Legendre symbol. Let $\chi$ be the character of $\Gal(\bar{K}/K)$ such that $\chi(\sigma)=\sigma(\sqrt[3]{\varpi^i})/(\sqrt[3]{\varpi^i})$ and also view it as a Hecke character or Grossencharacter of $K$ through the class field theory. In particular $\chi(\fq)=\lrb{\frac{\varpi^i}{\fq}}_3$. It is easy to check that $w_{\pi}(q)=\overline{\lrb{\frac{\varpi^i}{q}}_3}=\chi(q)^{-1}$ for $q\nmid 3p$, so in fact  $w_\pi=\chi^{-1}\mid_\BQ$. The base change of $\pi$ to $K$ is the principal series $\pi_K=\pi(\Theta,\Theta^c)$ where $\Theta^c(z)=\Theta(\bar z)$, see \cite{Labesse79} or \cite[Appendix E]{JMbook}. Note that in general $\Theta^c$ is different from $\bar{\Theta}$  where $\bar\Theta(z)=\ov{\Theta(z)}$, so $\pi_K$ is not necessarily the same as the principal series $\pi(\Theta,\bar{\Theta})$ which is the automorphic representation corresponding to $E_{\varpi^i}$.  Let $L(s,\pi_K,\chi)$ be the automorphic $L$-function of $\pi_K\otimes\chi$. We have the following relationship between the automorphic $L$-functions and the Hasse-Weil $L$-functions.

\begin{proposition}\label{Lfunction}
$$L\lrb{s-\frac{1}{2},\pi_K,\chi}=L(E_1,s)L(E_{p^i},s)$$
\end{proposition}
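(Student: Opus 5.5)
The plan is to reduce the identity to an equality of Hecke $L$-functions of $K$, and then to identify Hecke characters. The base change of $\pi$ to $K$ is the principal series $\pi_K=\pi(\Theta,\Theta^c)$. Hence $\pi_K\otimes\chi=\pi(\Theta\chi,\Theta^c\chi)$. For a principal series $\pi(\mu_1,\mu_2)$ of $\GL_2$ over a local field, the local $L$-factor is $L(s,\mu_1)L(s,\mu_2)$ at every place, including the ramified ones. Therefore, as complete Euler products,
\[L(s,\pi_K,\chi)=L(s,\Theta\chi)\,L(s,\Theta^c\chi).\]
Since $\Theta=\psi\,|\cdot|^{1/2}$ (unitarization), shifting $s$ by $1/2$ gives
\[L\lrb{s-\tfrac12,\pi_K,\chi}=L(s,\psi\chi)\,L(s,\psi^c\chi),\]
where $\psi^c=\psi\circ c$. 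It then suffices to show that $\psi\chi$ is the Hecke character of $E_1$ and that $\psi^c\chi$ is (up to composing with complex conjugation, which does not change the $L$-function) the Hecke character of $E_{p^i}$. The result then follows from Deuring's theorem $L(E_D,s)=L(s,\psi_{E_D})$ for the CM curves over $\BQ$, taking $L$ over $K$ of the Hecke character.

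At a prime $\fq\nmid 3p$ I would compute directly. First,
\[\psi(\fq)\chi(\fq)=\ov{\lrb{\tfrac{\varpi^i}{\fq}}_3}\,\lrb{\tfrac{\varpi^i}{\fq}}_3\,\varpi_\fq=\varpi_\fq,\]
which is the Hecke character of $E_1:y^2=x^3+\frac14$. Next, $\psi^c(\fq)=\psi(\bar\fq)=\ov{\lrb{\frac{\varpi^i}{\bar\fq}}_3}\,\bar\varpi_\fq$. Using $\lrb{\frac{a}{\bar\fq}}_3=\ov{\lrb{\frac{\bar a}{\fq}}_3}$, this equals $\lrb{\frac{\bar\varpi^i}{\fq}}_3\bar\varpi_\fq$. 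Hence
\[\psi^c(\fq)\chi(\fq)=\lrb{\tfrac{p^i}{\fq}}_3\,\bar\varpi_\fq.\]
Since $p^i$ is rational, this is $\psi_{p^i}(\bar\fq)$, where $\psi_{p^i}(\fq)=\ov{\lrb{\frac{p^i}{\fq}}_3}\varpi_\fq$ is the Hecke character of $E_{p^i}$. So $\psi^c\chi=\psi_{p^i}\circ c$, and $L(s,\psi_{p^i}\circ c)=L(s,\psi_{p^i})$ because $c$ permutes the ideals of $K$ preserving norms.

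Finally, I would promote the agreement at almost all places to an identity of idelic characters. Two Hecke characters of $K$ that agree on $\fq$ for all $\fq\nmid 3p$ agree on a dense subset of the idele class group, by Chebotarev density. They are therefore equal, with the same conductor and the same local factors at $\sqrt{-3}$ and at $\varpi,\bar\varpi$. As a consistency check one can verify that $\psi\chi$ and $\psi^c\chi$ have the infinity type $z\mapsto \bar z^{-1}$ (or its conjugate) expected of CM elliptic curves. One can also check that the conductor of $\psi^c\chi$ is divisible by $\varpi\bar\varpi$ (as it must be for $E_{p^i}$) while $\psi\chi$ is unramified at $p$. I expect this last step to be the main obstacle. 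The issue is not the density argument itself but ensuring that the normalizations are consistent, so that no spurious Euler factor at $3$ or $p$ creeps in. In particular one must check that the automorphic $L$-function in the statement is the full one, that $\chi$ is viewed with the same Artin-map normalization as in Proposition \ref{LCF}, and that $\varpi_\fq\equiv1\mod 3$ matches the normalization in Deuring's theorem for both $E_1$ and $E_{p^i}$.
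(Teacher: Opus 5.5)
Your proposal is correct and follows the paper's route. Both factor $L(s-\frac12,\pi_K,\chi)=L(s-\frac12,\chi\Theta)\,L(s-\frac12,\chi\Theta^c)$ and identify $\chi\Theta$ and $\chi\Theta^c$ with the Hecke characters of $E_1$ and of $E_{p^i}$ (composed with conjugation) by the same cubic-residue-symbol computation at $\fq\nmid 3p$. Your extra step, upgrading agreement at almost all $\fq$ to an equality of Hecke characters (this is really weak approximation rather than Chebotarev), usefully justifies the Euler factors at $3$, $\varpi$, $\bar\varpi$, which the paper treats only implicitly.
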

\begin{proof}
$L\lrb{s-\frac{1}{2},\pi_K,\chi}=L(s-\frac{1}{2},\chi\Theta)L(s-\frac{1}{2},\chi\Theta^c)$. 
By definition, $\chi$ is the cubic residue symbol such that $\chi(\fq)=\lrb{\frac{\varpi^i}{\fq}}_3$
for $\fq\nmid 3\varpi$. So $\chi\Theta(\fq)=\varpi_\fq(N\fq)^{-\frac{1}{2}}$ for any $\fq\nmid 3$.  
As a result,
\begin{eqnarray*}
L\lrb{s-\frac{1}{2},\chi\Theta}=\prod_{\fq\nmid 3}\lrb{1-q^{-s}\varpi_\fq}^{-1}=L(s,E_1).
\end{eqnarray*}

Since 
$${\lrb{\frac{\varpi^i}{\fq}}_3}=\ov{\lrb{\frac{\bar{\varpi}^i}{\bar\fq}}_3},$$ 
we get
\[\chi\Theta^c(\fq)=\lrb{\frac{\varpi^i}{\fq}}_3\ov{\lrb{\frac{\varpi^i}{\bar\fq}}_3}\bar\varpi_\fq(N\fq)^{-\frac{1}{2}}=\ov{\lrb{\frac{p^i}{\bar\fq}}_3}\bar\varpi_\fq(N\fq)^{-\frac{1}{2}}\]
for $\fq\nmid 3p$. So
\begin{eqnarray*}
L\lrb{s-\frac{1}{2},\chi\Theta^c}=\prod_{\fq\nmid 3p}\lrb{1-q^{-s}\ov{\lrb{\frac{p^i}{\bar\fq}}_3}\bar{\varpi}_\fq}^{-1}=L(s,E_{p^i}).
\end{eqnarray*}
\end{proof}

For any place $v$ of $\BQ$, let $\epsilon(1/2,\pi_v\times \pi_{\chi_v})$ be the local epsilon factor of the Rankin-Selberg  $L$-function of $\pi$ and $\pi_\chi$ where $\pi_\chi$ is the automorphic representation corresponding to the theta series of $\chi$. For a quaternion algebra $\BB$ over $\BA$, $\epsilon_v(\BB)=1$ if $\BB_v$ is split and $\epsilon_v(\BB)=-1$ otherwise. The quadratic character associated with the extension $K/\BQ$ will be denoted by $\eta$. Then we have the following Tunnell-Saito condition.
\begin{prop}\label{Tun-Saito}
The incoherent quaternion algebra $\BB$ over $\BA_\BQ$, which satisfies
$$\epsilon(1/2,\pi_v\times \pi_{\chi_v})=\chi_{v}(-1)\eta_v(-1)\epsilon_v(\BB)$$
for all places $v$ of $\BQ$, is only ramified at the infinity place.
\end{prop}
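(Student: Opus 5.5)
We compute, place by place, the local root number $\epsilon(1/2,\pi_v\times\pi_{\chi_v})\chi_v(-1)\eta_v(-1)$ and show that it equals $1$ at every finite place and $-1$ at $v=\infty$. By the defining relation this gives $\epsilon_v(\BB)=1$ for all finite $v$ and $\epsilon_\infty(\BB)=-1$, which is the claim. As a consistency check, the number of ramified places must be odd. This matches the global sign: by Proposition \ref{Lfunction}, $L(s,\pi_K,\chi)=L(E_1,s+\tfrac12)L(E_{p^i},s+\tfrac12)$, where $E_1$ has root number $+1$ and $E_{p^i}$ has root number $-1$ for $p\equiv 4,7\mod 9$ (this is what makes Theorem \ref{non-torsion} consistent).

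\textbf{Structure of the Rankin--Selberg factor.} Since $\pi=\pi(\Ind_{W_K}^{W_\BQ}\Theta)$ and $\pi_\chi=\pi(\Ind\chi)$, the Rankin--Selberg local factor is that of $\Ind\Theta\otimes\Ind\chi=\Ind_{K}^{\BQ}(\Theta\chi)\oplus\Ind_K^{\BQ}(\Theta^c\chi)$. Hence at each place $v$ of $\BQ$
\[\epsilon(1/2,\pi_v\times\pi_{\chi_v})=\prod_{w|v}\epsilon(1/2,\Theta_w\chi_w)\epsilon(1/2,\Theta^c_w\chi_w)\cdot\lambda(K_v/\BQ_v)^2,\]
and $\lambda^2=\eta_v(-1)$. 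The sign $\eta_v(-1)$ therefore cancels, and we are left to evaluate $\chi_v(-1)\prod_{w|v}\epsilon(\Theta\chi)_w\epsilon(\Theta^c\chi)_w$.

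\textbf{Place-by-place evaluation.}
\begin{itemize}
\item At finite $v\nmid 3p$, everything is unramified, so the product is $1$. Also $\chi_v(-1)=1$, because $\chi$ is cubic and $\chi(-1)^2=\chi(1)=1$ forces $\chi(-1)=1$. This last observation holds at every place.
\item At $v=p$, which splits as $\varpi\bar\varpi$, $\pi_p$ is a principal series $\pi(\Theta_\varpi,\Theta_{\bar\varpi})$ and $\pi_{\chi,p}=\pi(\chi_\varpi,\chi_{\bar\varpi})$. Since $\Theta\chi$ is unramified at $\varpi$ (indeed $\chi\Theta(\fq)=\varpi_\fq N\fq^{-1/2}$), the four epsilon factors pair up into products of the form $\epsilon(\mu)\epsilon(\mu^{-1})=\mu(-1)$, using the relation between $\Theta$ and $\Theta^c$ through $\bar{\cdot}$. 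The result is a product of values at $-1$ of cubic-type characters times unitary unramified characters, which is trivial.
\item At $v=3$, which is ramified, we compute $\epsilon(1/2,\Theta_3\chi_3)\epsilon(1/2,\Theta^c_3\chi_3)$. Here $\Theta\chi$ corresponds to $E_1$, with conductor $(\sqrt{-3})^2$, and $\Theta^c\chi$ corresponds to $E_{p^i}$. Alternatively, we use that the product of the two factors equals $\epsilon_3(E_1)\epsilon_3(E_{p^i})$ via Proposition \ref{Lfunction}. The known local root numbers of $y^2=x^3+D$ at $3$ (Rohrlich's formulas) then show that this sign is $+1$ for $p^i\equiv4,7\mod 9$.
\item At $v=\infty$, $\Theta\chi$ and $\Theta^c\chi$ both have infinity type $z\mapsto z/|z|$, so each contributes $\epsilon=i^{-1}$ (with the standard convention). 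Their product is $-1$, giving $\epsilon_\infty(\BB)=-1$.
\end{itemize}

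\textbf{Main obstacle.} The delicate step is the place $3$, where wild ramification and the dependence on $p^i\bmod 9$ enter. To handle it, we would reduce to global information: the product over all places equals the global root number $\epsilon(E_1)\epsilon(E_{p^i})=(+1)(-1)=-1$. Once the places $\infty$, $p$, and $v\nmid 3p$ are settled, the contribution at $3$ is forced to be $+1$, with no direct wild computation needed.
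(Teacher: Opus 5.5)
Your argument is correct and follows the paper's skeleton: determine the local sign at every $v\neq 3$, then let the global sign $-1$ force the sign at $3$. What differs is how the local signs are obtained.

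The paper quotes \cite[Propositions 6.3, 6.5]{Gross88} (and \cite[Lemma 3.1]{CST14}) for $\epsilon(1/2,\pi_v\times\pi_{\chi_v})$ directly. You instead pass to $\Ind_K^{\BQ}(\Theta\chi)\oplus\Ind_K^{\BQ}(\Theta^c\chi)$ and use $\lambda(K_v/\BQ_v)^2=\eta_v(-1)$. This gives $\epsilon_v(\BB)=\chi_v(-1)\prod_{w\mid v}\epsilon(1/2,\Theta_w\chi_w)\epsilon(1/2,\Theta^c_w\chi_w)$, so everything reduces to Tate constants of Hecke characters.

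The paper's route is shorter. Yours makes the sign bookkeeping transparent at exactly the two places where $\eta_v(-1)=-1$. The values the paper quotes, $-1$ at $\infty$ and $+1$ at $3$, are your $K_v$-normalized signs, i.e.\ root numbers of $\pi_{K_v}\otimes\chi_v$. The Rankin--Selberg root numbers over $\BQ_v$ in the displayed criterion differ from these by $\eta_v(-1)$. Inserted literally into that criterion with $\eta_\infty(-1)=\eta_3(-1)=-1$, the paper's numbers would make $\BB$ split at $\infty$ and ramified at $3$. Your cancellation of $\lambda^2$ against $\eta_v(-1)$ is what makes the conclusion come out right.

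At $p$, what makes the factors at $\varpi$ and $\bar\varpi$ pair off as $\epsilon(\mu)\epsilon(\mu^{-1})=\mu(-1)$ is the identity $\mu_\varpi\mu_{\bar\varpi}=\mu|_{\BQ_p^\times}=\eta_p=1$ for $\mu=\Theta\chi,\Theta^c\chi$. This is the central character condition $w_\pi=\chi^{-1}|_{\BA^\times}$, not a relation between $\Theta$ and $\Theta^c$.

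Three side remarks need correcting.

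\emph{The alternative route at $v=3$.} As written it is wrong. By your own inductivity formula, $\epsilon_3(E_1)\epsilon_3(E_{p^i})=\eta_3(-1)\,\epsilon(1/2,\Theta_3\chi_3)\epsilon(1/2,\Theta^c_3\chi_3)$. In fact $\epsilon_3(E_1)=-1$ (conductor $27$, global sign $+1$, sign $-1$ at $\infty$), while $\epsilon_3(E_{p^i})=+1$. So the product of the elliptic-curve root numbers at $3$ is $-1$; only the $K_3$-normalized product is $+1$.

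\emph{The global sign.} The fact $\epsilon(E_{p^i})=-1$ should be taken from the classical root-number computation for $x^3+y^3=p^i$ with $p^i\equiv 4,7\bmod 9$, as the paper implicitly does. It cannot be inferred from Theorem~\ref{non-torsion}, since a non-torsion point is equally compatible with even rank.

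\emph{Infinity types.} $\Theta\chi$ and $\Theta^c\chi$ have conjugate, not equal, infinity types. This is harmless, because the archimedean constant of $(z/|z|)^{\pm 1}$ is the same.
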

\begin{proof} Since $\chi$ is a cubic character, $\chi_v(-1)=1$ for any $v$. Also, $\eta_v(-1)=-1$ if and only if $v$ is ramified in $K$. Since $\pi$ is unramified at finite places $v\nmid 3p$, $\chi$ is unramified at finite places $v\nmid 3p$ and $p$ is split in $K$, by \cite[Proposition 6.3]{Gross88}(see also \cite[Lemma 3.1]{CST14}) we get $\epsilon(1/2,\pi_v\times\pi_{\chi_v})=+1$ for all  finite $v\neq 3$. Again by \cite[Proposition 6.5]{Gross88}, we also know that $\epsilon(1/2,\pi_\infty\times\pi_{\chi_\infty})=-1$. Since $\epsilon(1/2,\pi\times\pi_{\chi})=-1$, we see that $\epsilon(1/2,\pi_3\times\pi_{\chi_3})=+1$.  Hence $\BB$ is only ramified at the infinity place.
\end{proof}

Let $\mathbb{B}_f^{\times}=\mathrm{GL}_2\left(\mathbb{A}_f\right)$ be the finite part of $\mathbb{B}^{\times}$. For any open compact subgroup $U \subset \mathbb{B}_f^{\times}$, the Shimura curve $X_U$ associated with $\mathbb{B}$ of level $U$ is the usual modular curve with complex uniformization
$$
X_U(\mathbb{C})=\mathrm{GL}_2(\mathbb{Q})^{+} \backslash\left(\mathcal{H} \sqcup \mathbb{P}^1(\mathbb{Q})\right) \times \mathrm{GL}_2\left(\mathbb{A}_f\right) / U.
$$
We set
\[U_0(N)=\set{\matrixx{a}{b}{c}{d}\in\GL_2(\widehat\BZ)\mid c\equiv 0 \mod N\widehat{\BZ}}, \]
and
\[U_1(N)=\set{\matrixx{a}{b}{c}{d}\in U_0(N)\mid d\equiv 1 \mod N\widehat{\BZ}}.\]
Then $X_{U_0(N)}=X_{\Gamma_0(N)}$ and $X_{U_1(N)}=X_{\Gamma_1(N)}$.
In \cite{YZZ},  the Jacquet-Langlands correspondence of $\pi_f$ to $\BB$ is realized as 
$$\pi_A:=\varinjlim_{U}\Hom^0_{\xi_U}(X_U, A)$$ 
where $\Hom^0_{\xi_{U}}(X_U,A)$ denotes the morphisms in $\Hom_\BQ(X_U,A)\otimes_\BZ\BQ$ using the Hodge class $\xi_U$ as a base point. Then the $L$-function $L(s, \pi_A)$ is defined to be $L(s,\pi_f)$.
For the definition of the $L$-functions associated to the $\GL_2$-type abelian variety $A$, please refer to \cite[P7,P69]{YZZ} and \cite[P2530]{CST14}. In particular, it can be constructed from the $\ell$-adic representation of $A$ over $\BQ$ twisted by the induced representation of $\chi$ from $\Gal(\bar K/K)$ to $\Gal(\bar\BQ/\BQ)$. It can also be constructed from the $\ell$-adic representation of $A_K$ over $K$ twisted by $\chi$. In particular, it is valued in $K\otimes_{\BQ}\BC$ and can be viewed as vector-valued indexed by the embeddings of $\iota: K\hookrightarrow \BC$. And the Gross-Zagier formula will be understood as an equality of every component of the vector.  \cite[Theorem 3.8]{YZZ} says $L(s, A)=(L(s, \pi), L(s,\bar\pi))$. By Proposition \ref{Lfunction}, we have
\begin{eqnarray*}
L\lrb{s,A,\chi}&=&\lrb{L\lrb{s-\frac{1}{2},\pi_K,\chi},L\lrb{s-\frac{1}{2},\bar\pi_K,\bar\chi}}\\
                       &=&(L(E_1,s)L(E_{p^i},s), L(E_1,s)L(E_{p^i},s)).
\end{eqnarray*}

By \cite[Theorem 1.6]{CST14}, we have
\begin{equation}\label{G-Z}
L^{(p)'}\lrb{\frac{1}{2},\pi_K,\chi}=\frac{8\pi^2(f,f)_{\Gamma_0(N)}}{2\sqrt{3}p}\cdot\frac{\left\langle P^0_{\chi}(\varphi), P^0_{\chi^{-1}}(\varphi)\right\rangle_{K,K}}{(\varphi,\varphi)_{\CR^\times}}\cdot \frac{\beta^0(\varphi'_3,\varphi'_3)}{\beta^0(\varphi_3,\varphi_3)}\frac{\beta^0(\varphi'_p,\varphi'_p)}{\beta^0(\varphi_p,\varphi_p)},
\end{equation}
where $L^{(p)}(s,\pi_K,\chi)$ is the partial $L$-function with $p$-factor missing, $(\cdot,\cdot)_{\Gamma_0(N)}$ is the Petersson norm, $P^0_{\chi}(\varphi)$ is the Heegner cycle, $\langle\cdot,\cdot\rangle_{K,K}$ is a pairing from $A(\bar{K})_\BQ\otimes_K A^\vee(\bar{K})_\BQ$ to $\BC$ such that $\langle\cdot,\cdot\rangle_{K}=\rm{Tr}_{\BC/\BR}\langle\cdot,\cdot\rangle_{K,K}$ is the map from $A(\bar{K})_\BQ\otimes_K A^\vee(\bar{K})_\BQ$ to $\BR$ given by  the Neron-Tate height $\hat{h}_K$ over the base field $K$ (\cite[Section 1.2.4 and 7.1.1]{YZZ}), 
$\CR$ is the admissible order of $M_2(\wh{\BZ})$ for the pair $(\pi_A,\chi)$ (\cite[Definition 1.3]{CST14}) and $(\cdot,\cdot)_{\CR^\times}$ is the pairing on $\pi_A\times \pi_{A^\vee}$ defined as in \cite[Page 789]{CST17}. Note that in \eqref{G-Z} we identify $A^\vee$ with $A$ since $A$ is self-dual. Finally, the local period integral $\beta^0(\cdot,\cdot)$ is defined and computed in Section \ref{L}.

\subsection{Heegner cycles} \label{H}
In our case, the conductor of the character $\chi$ is $3\varpi$ by Proposition \ref{LCF}. The Heegner cycle in \cite{YZZ} and \cite{CST14} is 

\begin{equation*}P^0_{\chi}(\varphi) =\frac{\sharp \Pic(\CO_{p})}{\Vol(K^\times\bs\widehat{K}^\times)}
\int_{K^\times\backslash \widehat{K}^\times}\varphi(\tau_r)^{\sigma_t}\otimes\chi(t)dt, 
\end{equation*}
here $\CO_p$ is the order of conductor $p$ of $\CO_K$. Unfolding the integral we have
\begin{eqnarray*}
P^0_{\chi}(\varphi)
&=& \frac{\# \Pic(\CO_p)}{\# K^\times\backslash \widehat{K}^\times/U_{3\varpi}}\sum_{t\in K^\times\backslash \widehat{K}^\times/U_{3\varpi}}\varphi(\tau_r)^{\sigma_t}\otimes\chi(t)\\
&=& \frac{\# \Pic(\CO_p)\#\Gal(H_{3\varpi}/K(\sqrt[3]{\varpi}))}{\# K^\times\backslash \widehat{K}^\times/U_{3\varpi}}\sum_{\sigma\in \Gal(K(\sqrt[3]{\varpi})/K)}\varphi(\tau_r)^{\sigma}\otimes\chi(\sigma)\\
&=&\frac{p-1}{9}\sum_{\sigma\in \Gal(K(\sqrt[3]{\varpi})/K)}\varphi(\tau_r)^{\sigma}\otimes\chi(\sigma).
\end{eqnarray*}
Similarly,

\begin{eqnarray*}P^0_{\chi^{-1}}(\varphi)=\frac{p-1}{9}\sum_{\sigma\in \Gal(K(\sqrt[3]{\varpi})/K)}\varphi(\tau_r)^{\sigma}\otimes \chi^{-1}(\sigma).
\end{eqnarray*}
Here the sum in the last expression just means a linear combination (not the complex multiplication and the addition on the group of rational points).

\begin{eqnarray*}&&\langle P^0_{\chi}(\varphi),P^0_{\chi^{-1}}(\varphi)\rangle_{K,K}\\&=&\frac{(p-1)^2}{81}
\langle\sum_{\sigma\in \Gal(K(\sqrt[3]{\varpi})/K)}\varphi(\tau_r)^{\sigma}\otimes\chi(\sigma),\sum_{\sigma\in \Gal(K(\sqrt[3]{\varpi})/K)}\varphi(\tau_r)^{\sigma}\otimes\chi^{-1}(\sigma)\rangle_{K,K}\\
&=&\frac{(p-1)^2}{27}\langle \varphi(\tau_r),\sum_{\sigma\in\Gal(K(\sqrt[3]{\varpi})/K)}\varphi(\tau_r)^\sigma\otimes\chi(\sigma)\rangle_{K,K}\\
&=&\frac{(p-1)^2}{27}\left(\langle \varphi(\tau_r),\varphi(\tau_r)\rangle_{K,K}-\left\langle \varphi(\tau_r),\varphi(\tau_r)^{\sigma'}\right\rangle_{K,K}\right),
\end{eqnarray*}
where $\sigma'$ is a generator of $\Gal(K(\sqrt[3]{\varpi})/K)$. In the last equality we use the fact that $\langle \varphi(\tau_r),\varphi(\tau_r)^{\sigma'}\rangle_{K,K}=\langle \varphi(\tau_r),\varphi(\tau_r)^{\sigma'^2}\rangle_{K,K}$ since $\langle,\rangle_{K,K}$ is symmetric and Galois invariant. We can assume $\varphi(\tau_r)^{\sigma'}=[\omega]\varphi(\tau_r)$, then
$$\left\langle \varphi(\tau_r),\varphi(\tau_r)^{\sigma'}\right\rangle_{K,K}=\frac{1}{2}\left(\widehat{h}_K([1+\omega]\varphi(\tau_r))
-\widehat{h}_K([\omega]\varphi(\tau_r))-\widehat{h}_K(\varphi(\tau_r))\right).$$
Since $|1+\omega|=|\omega|=1$, by definition, $\widehat{h}_K([1+\omega]\varphi(\tau_r))=\widehat{h}_K([\omega]\varphi(\tau_r))=\widehat{h}_K(\varphi(\tau_r))$. Then
$$
\left\langle \varphi(\tau_r),\varphi(\tau_r)^{\sigma'}\right\rangle_{K,K}=-\frac{1}{2}\widehat{h}_K(\varphi(\tau_r)),$$
and hence
\begin{equation}\label{HeightPair}
\left\langle P^0_{\chi}(\varphi),P^0_{\chi^{-1}}(\varphi)\right\rangle_{K,K}=\frac{(p-1)^2}{2\cdot 9}\widehat{h}_K(\varphi(\tau_r))=\frac{(p-1)^2}{9}\widehat{h}_\BQ(\phi\circ\varphi(\tau_r)),
\end{equation}
here we use the fact that the Neron-Tate height is invariant under under isomorphisms given by scaling of coordinates.
\subsection{Petersson norm of $f$}\label{P}
Let 
\[(f,f)_{\Gamma_1(N)}=\int\int_{\Gamma_1(N)\bs\BH}|f(z)|^2 dz\]
and
\[(f,f)_{\Gamma_0(N)}=\int\int_{\Gamma_0(N)\bs\BH}|f(z)|^2 dz\]
be the Petersson norms of $f$ concerning different congruence subgroups. By \cite[Page 374]{Zagier85}, 
$$(f,f)_{\Gamma_1(N)}=\frac{\deg \varphi}{4\pi^2 c\bar c}\cdot\Vol(E_{\bar{\varpi}^i}),$$ 
here $\varphi:X_1(N)_{/K}\ra E_{\bar\varpi^i}$ is the modular parametrization and $c$ is the Manin-Stevens constant which is proved to be a unit in \cite[Section 6]{Yin26}. Let $\Omega_1$ and $\Omega_{p^i}$ be the real periods of $E_1$ and $E_{p^i}$ associated to the invariant differential $\frac{dx}{2y}$. These are the minimal periods that appear in the BSD conjecture by the minimal equations of $E_1$ and $E_{p^i}$ described in \cite[Lemma 7]{Yin}.  Using SageMath we can check that $[\Omega_1,\Omega_1\frac{1}{\sqrt{-3}}(\frac{-1}{2}+\frac{\sqrt{-3}}{2})]$ is a basis of the period lattice of $E_1$. Since $E_{p^i}$ is isomorphic to $E_1$ over $\BR$ (in particular their invariant differential is different by a factor $\sqrt[3]{p^i}$), $[\Omega_{p^i},\Omega_{p^i}\frac{1}{\sqrt{-3}}(\frac{-1}{2}+\frac{\sqrt{-3}}{2})]$ is also a basis of the period lattice of $E_{p^i}$. So $\Vol(E_1)=\frac{1}{2\sqrt{3}}\Omega_1^2$ and $\Vol(E_{p^i})=\frac{1}{2\sqrt{3}}\Omega_{p^i}^2$. Since $\frac{dx}{2\sqrt{x^3+\frac{\bar{\varpi}^{2i}}{4}}}$ is the invariant differential on $E_{\bar\varpi^i}$ that corresponds to $dz$ on $\BC/\Lambda$ where $\Lambda$ is the period lattice of $E_{\bar\varpi^i}$ (see \cite[P171]{Silvermanbook1}), we have

\[\Vol(E_{\bar\varpi^i})=\frac{1}{2\sqrt{-1}}\int_{\BC/\Lambda}dz\wedge d\bar z=\frac{1}{2\sqrt{-1}}\int_{E_{\bar\varpi}(\BC)}\frac{dx}{2\sqrt{x^3+\frac{\bar{\varpi}^{2i}}{4}}}\wedge \frac{d\bar x}{2\sqrt{{\bar x}^3+\frac{{\varpi}^{2i}}{4}}}.\]
Changing variable $x\mapsto \sqrt[3]{{\bar\varpi}^{2i}}x$, then $\bar x\mapsto \sqrt[3]{{\varpi}^{2i}}\bar x$ and the integral domain changes to $E_1(\BC)$, so 
\[\Vol(E_{\bar\varpi^i})=\frac{1}{2\sqrt{-1}}\frac{1}{\sqrt[3]{p^i}}\int_{E_{1}(\BC)}\frac{dx}{2\sqrt{x^3+\frac{1}{4}}}\wedge \frac{d\bar x}{2\sqrt{{\bar x}^3+\frac{1}{4}}}=\frac{1}{\sqrt[3]{p^i}}\Vol(E_1).\]
Similarly $\Vol(E_{p^i})=\frac{1}{\sqrt[3]{p^{2i}}}\Vol(E_1)$. As a result, 
$$\Vol(E_{\bar\varpi^i})^2=\Vol(E_{p^i})\Vol(E_1)=\frac{1}{12}(\Omega_1\Omega_{p^i})^2$$ 
and $\Vol(E_{\bar\varpi^i})=\frac{1}{2\sqrt{3}}\Omega_1\Omega_{p^i}$. So, finally, we have
\begin{equation}\label{Petter}
(f,f)_{\Gamma_0(N)}=\frac{1}{\vartheta(N)}(f,f)_{\Gamma_1(N)}=\frac{\deg\varphi}{8\sqrt{3}\pi^2\vartheta(N)}\Omega_1\Omega_{p^i},
\end{equation}
where $\vartheta$ is the Euler function.

\subsection{Pairing of $\pi_A$}\label{Pa}
Let $A^\vee$ be the dual of $A$, there is a perfect pairing
\[(\cdot,\cdot)_U: \pi_A\times\pi_{A^\vee}\ra \End_\BQ(A)\cong K\]
given by
\[(v_1,v_2)_U\mapsto v_1\circ v_2^\vee.\]
where $v_1\in\Hom(J_U,A),v_2\in\Hom(J_U,A^\vee)$ and $v_2^\vee$ is the dual of $v_2$ composed with the canonical isomorphism $J_U^\vee\cong J_U$. See \cite[3.2.4]{YZZ} about more information on this pairing.
The pairing of new forms appears in the explicit Gross-Zagier formulae, so we need to compute it explicitly.

Since the functor of Weil restriction is right adjoint to the functor of base change, we have $\Hom_\BQ(J_1(N)_{/K}, A)=\Hom_K(J_1(N), E_{\bar\varpi})$. Let $\varphi_\BQ$ be the map corresponding to $\varphi$, i.e. $\varphi_\BQ$ is the Weil restriction of $\varphi$, then $\varphi_\BQ^\vee\in\Hom_\BQ(A,J_1(N))$ is also the Weil restriction of $\varphi^\vee\in\Hom_K(E_{\bar\varpi},J_1(N)_{/K})$. As a result, $\varphi_\BQ\circ \varphi_\BQ^\vee$ is also the Weil restriction of $\varphi\circ \varphi^\vee$. It is well-known that $\varphi\circ \varphi^\vee$ is the isogeny $[\deg \varphi]$, so $\varphi_\BQ\circ \varphi_{\BQ}^\vee$ is also the isogeny $[\deg \varphi]$. So we have:
\begin{equation}\label{newformpair} 
(\varphi_\BQ,\varphi_\BQ)_{U_1(N)}=\deg \varphi.
\end{equation}

\subsection{Admissible orders}\label{A}
The definition of admissible order in \cite{CST14} is very technical, so we will not write it here, please refer to the original paper.
Let $\CR$ be the admissible order for $(\pi,\chi)$ defined in \cite[Definition 1.3]{CST14}, then $\CR^\times$ is different from $U_1(N)$ only at places $3$ and $p$. In this subsection, we give the admissible orders at places $p$ and $3$ explicitly and compute their volumes with respect to the Tamagawa number measure which is needed to compute the pairing $(\varphi_\BQ,\varphi_\BQ)_{\CR^\times}$.

\subsubsection{Admissible orders at place $\varpi$}

Let $K_p=K\otimes\BZ_p$ and $\CO_{K_p}=\CO_K\otimes\BZ_p$.	Note that 
$$\iota_1(\CO_{K}\otimes\BZ_p)=\matrixx{\BZ_p}{\BZ_p}{p\BZ_p}{\BZ_p}.$$ Let 
$$R'=\matrixx{\BZ_p}{p\BZ_p}{p^{-1}\BZ_p}{\BZ_p}\ \text{and}\ R''=\matrixx{\BZ_p}{\BZ_p}{\BZ_p}{\BZ_p}$$ 
be the maximal orders such that $R'\cap \iota_1(K\otimes\BZ_p)=\CO_p$ and $R''\cap \iota_1(K\otimes\BZ_p)=\CO_{K_p}$. Then 
$$\CR_p=R'\cap R''=\matrixx{\BZ_p}{p\BZ_p}{\BZ_p}{\BZ_p}$$ is the admissible order for $(\pi_p,\chi_p)$ by the definition in \cite[Page 2530]{CST14}. The maximal ideals of $\CR_p$ are 
$$\matrixx{p\BZ_p}{p\BZ_p}{\BZ_p}{\BZ_p}\ \text{and}\ \matrixx{\BZ_p}{p\BZ_p}{\BZ_p}{p\BZ_p}.$$ 
As a result, the radical of $\CR_p$ is 
$$\text{Rad}(\CR_p)=\matrixx{p\BZ_p}{p\BZ_p}{\BZ_p}{p\BZ_p}$$ 
and $\CR_p/\text{Rad}(\CR_p)=\BF_p^2$. By \cite[Lemma 3.5]{CST14}, 
$$\Vol(\CR_p^\times)=\Vol(U_0(N)_p)=\vartheta(p)\Vol(U_1(N)_p),$$ the volume here is with respect to the Haar measure such that $\Vol(\GL_2(\BQ_p))=L(2,1_{\BQ_p})^{-1}$. 

\subsubsection{Admissible orders at place $3$}
Let $\CR_3$ be the $\BZ_3$-order of $M_2(\BZ_3)$ with discriminant $N/p$ satisfying
$\CR_3\cap K_3=\CO_{K_3}$ which is unique by \cite[Lemma 3.4]{CST14}. By \cite[Page 1159]{Gross88}, we can take $\CR_3=\CO_{K_3}+I\bar{\CR}$ where
$\bar{\CR}$ is a maximal order of $M_2(\BZ_3)$ containing $\CR_3$ and $I$ is an ideal of $\CO_{K_3}$ such that $\ord_3(N/p)=d(\bar{\CR})+{\text{length}_\BZ(\CO_{K_3}/I)}$, Here $d(\bar{\CR})$ is the exponent of the discriminant of $\bar{\CR}$.
It is easy to see that $\text{Rad}(\CR_3)=\sqrt{-3}\CO_{K_3}+I\bar{\CR}$ and $\CR_3/(\text{Rad}(\CR_3))=\BF_3$. By the \cite[Lemma 3.5]{CST14}, $$\Vol(\CR_3^\times)=\frac{3}{2}\Vol(U_0(N)_3)=\frac{3\vartheta(N/p)}{2}\Vol(U_1(N)_3).$$

Finally, we have
$$\frac{\Vol(\CR^\times)}{\Vol(U_1(N))}=\frac{\Vol(\CR^\times_3)}{\Vol(U_1(N)_3)}\cdot\frac{\Vol(\CR^\times_p)}{\Vol(U_1(N)_p)}=\frac{3\vartheta(N)}{2}.$$ 
So 
\begin{equation}\label{DualPair}
(\varphi_\BQ,\varphi_\BQ)_{\CR^\times}=\frac{\Vol(X_{\CR^\times})}{\Vol(X_{U_1(N)})}\deg \varphi=\frac{\Vol(U_1(N))}{\Vol(\CR^\times)}\deg \varphi=\frac{2}{3\vartheta(N)}\deg \varphi.
\end{equation}

\subsection{The explicit formula}By (\ref{G-Z}), (\ref{HeightPair}), (\ref{Petter}), (\ref{newformpair}), (\ref{DualPair}), (\ref{W1}) and Proposition \ref{TestingForNew}, \ref{Prop:TestingForNew}, We get
\begin{equation}
\frac{L(E_1,1)}{\Omega_1}\frac{L'(E_{p^i},1)}{\Omega_{p^i}}=\frac{2^{\delta(p^i)}}{9}\hat{h}_\BQ(\varphi(\tau))
\end{equation}
with
\[\delta(p^i)=\begin{cases}0,& p^i\equiv 4\mod 9,\\ -1,& p^i\equiv 7\mod 9.\end{cases}\]
The value $\frac{L(E_1,1)}{\Omega_1}$ can be computed explicitly  which is $\frac{1}{9}$. So we get 

\begin{equation}\label{GZ}
\frac{L'(E_{p^i},1)}{\Omega_{p^i}}=2^{\delta(p^i)}\hat{h}_\BQ(\varphi(\tau)).
\end{equation}
This proves Theorem \ref{main}. 

By \cite[section 5]{Stephens}, the local Tamagawa numbers of $E_{p^i}$ are given as follows:
\[c_v(E_{p^i})=\begin{cases}1,& v\mid 3\ \text{and}\ p^i\equiv 4\mod 9,\\ 2,& v\mid 3\ \text{and}\ p^i\equiv 7\mod 9,
\\ 3,& v\mid p, \\ 1,& v\nmid 3p. \end{cases}\]
Since $|E_{p^i}(\BQ)_\tor|=3$, the BSD conjecture (\cite[Section C.16]{Silvermanbook1}, but the Neron-Tate height in \cite{YZZ} is twice the height in \cite{Silvermanbook1}) predicts that
\begin{equation}\label{BSD}
\frac{L'(E_{p^i},1)}{\Omega_{p^i}}=\frac{2^{-\delta(p^i)}\Sha{(E_{p^i})}\hat{h}_\BQ(P)}{3},
\end{equation}
where $P$ is the generator of $E_{p^i}(\BQ)$. Then (\ref{GZ}) together with (\ref{BSD}) predict that $[\sqrt{-3}]\varphi(\tau_r)$ is usually the generator or twice the generator (depending on $N=9p$ or $27p$) of $E_{p^i}(\BQ)$ if $\Sha{(E_{p^i})}=1$. 

\section{Local Waldspurger integral}\label{L}
In this last section, we compute the local integrals which appear in the Gross-Zagier formula.

Recall that $\pi$ is the automorphic representation of $\GL_2(\BQ)$ corresponding to the theta series $f$ and $\chi:\Gal(\bar{K}/K)\ra \CO_K^\times$ is the character given by $\chi(\sigma)=(\sqrt[3]{\varpi})^{\sigma-1}$. We also view $\chi$ as a Hecke character on $\BA_K^\times$ by the Artin map. For $q=p$ or $3$, $\pi_q$ and $\chi_q$ will be the $q$ component of $\pi$ and $\chi$. Assume $v_q$ is a vector in $\pi_q$ and $(\cdot,\cdot)$ is an invariant Hermitian form on $\pi_q$.  
We define the following normalized period integral (i.e. Waldspurger integral)
\begin{equation}
\beta^0(v_q, v_q)
=\int\limits_{t\in \BQ_q^\times\backslash K_q^\times}\frac{ (\pi(t)v_q,v_q)}{(v_q,v_q)}\chi_q(t)dt.
\end{equation}

In this section, we will compute the ratio
\[\frac{\beta^0(\varphi'_q, \varphi'_q)}{\beta^0(\varphi_q,\varphi_q)}\]
for the new form $\varphi_q$ and the admissible test vector $\varphi'_q$
which appears in the proof of the explicit Gross-Zagier formulae. 

Since $p$ is split and $3$ is ramified in $K$, the local representation $\pi_p$ is a principal series with conductor $p$ while $\pi_3$ is supercuspidal with conductor $9$ or $27$ by \cite{GL}, we also say $\pi_p$ is of level $1$ and $\pi_3$ is of level $2$ or $3$. For any character $\xi$ of $\BZ_3^\times$ or $\CO_{K_3}^\times$, we say $\xi$ is of level $n$ if $\xi$ is trivial on $1+3^n\BZ_3$ or $1+\sqrt{-3}^n\CO_{K_3}$. The level of the representations and characters are denoted as $c(\pi_3)$ and $c(\xi)$. The computation splits into three cases and different cases require different methods.

\subsection{Local Waldspurger integral at prime $p$.}
Let $\Theta$ and $\chi$ be the unitary Hecke character as in Section \ref{EGZ}. For the split prime $p$, we write $\Theta_p$ and $\chi_p$ for the restriction of $\Theta$ and $\chi$ to $K_\varpi\oplus K_{\bar\varpi}$. Since $p$ is split in $K$, $\pi_p$ is the principal series $\pi(\Theta_\varpi,\Theta_{\bar\varpi})$ where $\Theta_\varpi=\Theta|_{K_\varpi},\Theta_{\bar\varpi}=\Theta|_{K_{\bar\varpi}}$. In the principal series case, the invariant Hermitian form on $\pi_p$ can be taken as

\[(v_1,v_2):=\int_{\BQ_p^\times}W_{v_1}\lrb{\matrixx{a}{}{}{1}}\ov{W_{v_2}\lrb{\matrixx{a}{}{}{1}}}d^\times a.\]
for any $v_1,v_2\in\pi_p$, where $W_{v_1}$ and $W_{v_2}$ are the Whittaker functions associated to $v_1$ and $v_2$. Recall that $\varphi\in\pi$ is the newform and its $p$-component $\varphi_p$ is the newform of $\pi_p$.

\begin{lemma}\label{trivial}
Let $p\equiv 1\mod 3$ be a prime and $p=\varpi\bar\varpi$ with $\varpi\equiv \pm 1\mod 3$ in $\CO_K$, then the cubic Hilbert symbol
 $$\lrb{\frac{\varpi,\bar\varpi}{K_3}}_3=\lrb{\frac{\varpi,\bar\varpi}{K_\varpi}}_3=\lrb{\frac{\varpi,\bar\varpi}{K_{\bar\varpi}}}_3=1.$$ 
\end{lemma}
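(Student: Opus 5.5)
Since $\varpi$ and $\bar\varpi$ are coprime units away from $3$, $\varpi$, $\bar\varpi$, the cubic Hilbert symbol $\lrb{\frac{\varpi,\bar\varpi}{K_v}}_3$ is trivial at every finite place $v\nmid 3p$ (both entries are units there and $v\nmid 3$). It is also trivial at the infinite place, which is complex. Hilbert reciprocity for the cubic symbol over $K$ (which contains $\mu_3$) therefore gives
\[\lrb{\frac{\varpi,\bar\varpi}{K_3}}_3\lrb{\frac{\varpi,\bar\varpi}{K_\varpi}}_3\lrb{\frac{\varpi,\bar\varpi}{K_{\bar\varpi}}}_3=1 .\]
So it suffices to show that two of the three symbols vanish. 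I will compute the two symbols at $\varpi$ and $\bar\varpi$ directly, and deduce the one at $3$ from reciprocity.

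At $K_\varpi$, the element $\bar\varpi$ is a unit and $\varpi$ is a uniformizer. The tame formula gives $\lrb{\frac{\varpi,\bar\varpi}{K_\varpi}}_3$ as the cubic residue symbol $\lrb{\frac{\bar\varpi}{\varpi}}_3^{\pm1}$, the sign of the exponent depending on the convention. Likewise, at $K_{\bar\varpi}$ the symbol equals $\lrb{\frac{\varpi}{\bar\varpi}}_3^{\mp1}$. Now $\varpi\equiv\pm1\mod 3$ forces $\bar\varpi\equiv\pm1\mod 3$ as well, since $\bar\varpi$ is the conjugate of $\varpi$ and $\pm1$ is rational. So both are primary up to sign, and $-1=(-1)^3$ is a cube, so the signs are harmless. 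Cubic reciprocity (Eisenstein) then gives $\lrb{\frac{\bar\varpi}{\varpi}}_3=\lrb{\frac{\varpi}{\bar\varpi}}_3$.

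The key extra observation is that $\lrb{\frac{\varpi}{\bar\varpi}}_3$ is itself trivial, and I would show this via complex conjugation. Conjugation gives $\lrb{\frac{\varpi}{\bar\varpi}}_3=\ov{\lrb{\frac{\bar\varpi}{\varpi}}_3}$, which by reciprocity equals $\ov{\lrb{\frac{\varpi}{\bar\varpi}}_3}$. Hence this cube root of unity equals its own conjugate, so it is $1$.

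Thus the symbols at $K_\varpi$ and $K_{\bar\varpi}$ are both $1$, and reciprocity gives the symbol at $K_3$ is $1$ too. The main point to check carefully is the sign and exponent conventions in the tame symbol formula. Because the residue symbols involved are $1$ regardless, these conventions do not affect the conclusion.
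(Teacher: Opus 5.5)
Your proof is correct and follows the paper's route: first show the symbols at $K_\varpi$ and $K_{\bar\varpi}$ are trivial, then get the one at $K_3$ from the product formula. Where the paper simply cites Lemmermeyer's Exercise 7.18 for the first step, you derive it from the tame symbol, complex conjugation and cubic reciprocity, but you need cubic reciprocity in its form for arbitrary coprime primary elements: statements such as Ireland--Rosen's assume $N\pi_1\neq N\pi_2$ and so exclude precisely the pair $\varpi,\bar\varpi$, and via the product formula this equal-norm case is equivalent to the triviality at $K_3$ itself, so cite a source that covers it.
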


\begin{proof}
By \cite[Exercise 7.18]{Reciprocity}, $\lrb{\frac{\varpi,\bar\varpi}{K_\varpi}}_3=\lrb{\frac{\varpi,\bar\varpi}{K_{\bar\varpi}}}_3=1$. By the product formula of Hilbert symbol, $\lrb{\frac{\varpi,\bar\varpi}{K_3}}_3=1$.

\end{proof}

For the construction of CM points in Section \ref{EGZ}, we use the embedding $\iota_1$ of $K$ into $M_2(\BQ)$ such that
\[\iota_1:\omega\mapsto \matrixx{-r}{-1/3}{3(r^2-r+1)}{r-1}.\]
For the split prime $p$, there is another simple embedding $\iota_3$ of $K\otimes\BQ_p$ into $M_2(\BQ_p)$ such that
\[\iota_3:\omega\mapsto \matrixx{\alpha^2}{0}{0}{\alpha},\]
here $\alpha$ is the unique cubic root of unity in $\BZ_p$ such that $p|(u+v\alpha)$ where $\varpi=u+v\omega$. Let $\iota_3\circ\iota_p^{-1}:K_\varpi\oplus K_{\bar\varpi}\cong K\otimes\BQ_p\hookrightarrow M_2(\BQ_p)$ be the composition of maps, then 
$$\iota_3\circ\iota_p^{-1}((a,1))=\matrixx{1}{0}{0}{a},\ \ \iota_3\circ\iota_p^{-1}((1,b))=\matrixx{b}{0}{0}{1},$$
for any $a\in K_\varpi$ and $b\in K_{\bar\varpi}$.
For more details on the above maps please see \cite[Section 4]{Yin26}.

The matrix
\[T'=\frac{-1}{3(2\alpha+1)}\matrixx{1}{1}{-3(r+\alpha^2)}{-3(r+\alpha)}\]
with
\[T'^{-1}=\matrixx{-3(r+\alpha)}{-1}{3(r+\alpha^2)}{1}\]
satisfies $$T'^{-1}\iota_1(\omega)T'=\matrixx{\alpha^2}{0}{0}{\alpha}.$$ Remember that we have chosen $r$ such that $r+\alpha^2\equiv 0\mod p$. Then $\pi_p(T'^{-1})\varphi_p=\varphi_p$.
So the Waldspurger integral of the new vector $\varphi_p$ is 
\begin{eqnarray*}
\int_{\iota_1(K_p^\times/\BQ_p^\times)}\frac{(\pi_p(t)\varphi_p,\bar{\varphi}_p)\chi_p(t)}{(\varphi_p,\bar{\varphi}_p)}dt
&=&\int_{\iota_3(K_p^\times/\BQ_p^\times)}\frac{(\pi_p(T'tT'^{-1})\varphi_p,\bar{\varphi}_p)\chi_p(t)}{(\varphi_p,\bar{\varphi}_p)_p}dt\\
&=& \int_{\iota_3(K_p^\times/\BQ_p^\times)}\frac{(\pi(t)\pi(T'^{-1})\varphi_p,\pi(T'^{-1})\bar{\varphi}_p)\chi_p(t)}{(\varphi_p,\bar{\varphi}_p)_p}dt\\
&=& \int_{\iota_3(K_p^\times/\BQ_p^\times)}\frac{(\pi(t)\varphi_p,\bar{\varphi}_p)\chi_p(t)}{(\varphi_p,\bar{\varphi}_p)_p}dt
\end{eqnarray*}
We choose the representatives $(1,b)$ of $K_p^\times/\BQ_p^\times$, the integral above becomes
\begin{eqnarray*}
&=& {(\varphi_p,\bar{\varphi}_p)_p}^{-1}\int_{b\in\BQ_p^\times}\int_{a\in\BQ_p^\times}W_{\varphi_p}\lrb{\matrixx{ab}{0}{0}{1}}\overline{W_{\varphi_p}\lrb{\matrixx{a}{0}{0}{1}}}\chi_p((1,b))d^\times a d^\times b\\
&=&{(\varphi_p,\bar{\varphi}_p)_p}^{-1}\int_{c\in\BQ_p^\times}W_{\varphi_p}\lrb{\matrixx{c}{0}{0}{1}}\chi_p((1,c))d^\times c\overline{\int_{a\in\BQ_p^\times}W_{\varphi_p}\lrb{\matrixx{a}{0}{0}{1}}\chi_p((1,a))d^\times a}
\end{eqnarray*}
Note that $\pi_p=\pi(\Theta_\varpi,\Theta_{\bar\varpi})$ where $\Theta_\varpi=\Theta|_{K_\varpi},\Theta_{\bar\varpi}=\Theta|_{K_{\bar\varpi}}$ and $\chi_p=(\chi_\varpi,\chi_{\bar\varpi})$ where $\chi_\varpi=\chi |_{K_\varpi},\chi_\varpi=\chi|_{K_{\bar\varpi}}$. By Proposition \ref{LCF} and Lemma \ref{trivial}, $\psi_\varpi,\chi_\varpi$ are of level $1$ and $\Theta_{\bar\varpi},\chi_{\bar\varpi}$ are both unramified characters. In fact $\chi_{\bar\varpi}$ is trivial while $\Theta_{\bar\varpi}(\bar\varpi)=\Theta(\bar\fp)=\bar\varpi/p^{1/2}$. By the description of the newforms in \cite[Page 23]{Schmidt},
\[W_{\varphi_p}\lrb{\matrixx{c}{0}{0}{1}}=\begin{cases}|c|_p^{1/2}\Theta_{\bar\varpi}(c),&\text{if}\ \ \ c\in\BZ_p-\{0\},\\ 0,& \text{otherwise}.\end{cases}\]
So, \[\int_{c\in\BQ_p^\times}W_{\varphi_p}\lrb{\matrixx{c}{0}{0}{1}}\chi_p((1,c))d^\times c=\frac{1}{1-\bar{\varpi} p^{-1}}.\]
Since $(\varphi_p,\bar{\varphi}_p)=L_p(1,\pi,\ad)$ by \cite[Proposition 3.11]{CST14}, we get that
\begin{eqnarray*}
\int_{\iota_1(K_p^\times/\BQ_p^\times)}\frac{(\pi_p(t)\varphi_p,\bar{\varphi}_p)\chi_p(t)}{(\varphi_p,\bar{\varphi}_p)}dt&=&\lrb{\frac{1}{1-\bar\varpi p^{-1}}}\lrb{\frac{1}{1-\varpi p^{-1}}}L_p(1,\pi,\ad)^{-1}\\
&=&\frac{L_p(E_1,1)}{L_p(1,\pi,\ad)}.
\end{eqnarray*}
Let $\varphi_p'$ be the admissible test vector in \cite{CST14}, then by \cite[Proposition 3.12]{CST14}, 
$$\beta^0(\varphi_p',\varphi_p')=\frac{L_p(1,1_\BQ)^2}{pL_p(1,\pi,\ad)}.$$
So we get
\begin{equation}\label{W1}
\frac{\beta^0(\varphi_p',\varphi_p')}{\beta^0(\varphi_p,\varphi_p)}=\frac{L_p(1,1_\BQ)^2}{pL_p(E_1,1)}=\frac{p}{(p-1)^2L_p(E_1,1)}.
\end{equation}
Note that the $L$-factor $L_p(E_1,1)$ completes the $L$ function in \eqref{G-Z}.

\subsection{Local Waldspurger integral at prime $3$ in the case $c(\pi_3)=2$}

For the newform $\varphi_3\in\pi_3$, define the local matrix coefficient
\[\Phi(t)=\frac{(\pi_3(t)\varphi_3,\varphi_3)}{(\varphi_3,\varphi_3)},\quad t\in \GL_2(\BQ_3).\]
Since $\chi_3$ has conductor $2$, we have
\begin{eqnarray}\label{beta}
\beta^0(\varphi_3,\varphi_3)&=&\frac{\Vol(K_3^\times/\BQ_3^\times)}
{ |K_3^\times/\BQ_3^\times (1+3\CO_{K,3})|}\sum_{t\in K_3^\times/\BQ_3^\times (1+3\CO_{K,3})}\Phi(\iota_1(t))\chi_{3}(t) \nonumber\\
&=&\frac{\Vol(K_3^\times/\BQ_3^\times)}
{ 6}\sum_{t\in S\bigsqcup S'}\Phi(\iota_1(t))\chi_{3}(t),
\end{eqnarray}
where
\[S=\{1+y\sqrt{-3}\mid y\in \BZ/3\BZ\},\quad S'=\{3y+\sqrt{-3}\mid y\in \BZ/3\BZ\}.\]
Note that $S\bigsqcup S'$ is a complete system of representatives of 
$$K_3^\times/\BQ_3^\times(1+3\CO_{K,3}).$$ 
In the above, we view $\chi$ as an adelic character through the class field theory and $\chi_3$ is the $3$-adic component of $\chi$. In order to compute $\beta_3^0(\varphi_3,\varphi_3)$, it suffices to compute the local matrix coefficients $\Phi(t)$ for $t\in S\bigsqcup S'$.

Let $\psi$ be the additive character such that $\psi(x)=e^{2\pi \sqrt{-1} \iota(x)}$ where $\iota:\BQ_3\rightarrow \BQ_3/\BZ_3 \subset \BQ/\BZ$ is the map given by $x\mapsto -x\mod \BZ_3$ and put $\psi^-(x)=\psi(-x)$. Let $dx$ be the Haar measure on $\BQ_3$ which is self-dual with respect to $\psi$, and we fix a Haar measure $d^\times x$ on $\BQ_3^\times$ such that $\Vol(\BZ_3^\times)=1$. The Kirillov model $\sK(\pi_3,\psi)$ is the unique realization of $\pi_3$ on the Schwartz function space $\CS(\BQ_3^\times)$ such that
$$\pi_3\left(\matrixx{a_1}{b}{0}{a_2}\right)\phi(x)=w_{\pi_3}(a_2)\psi( bx/a_2)\phi(a_1x/a_2),\quad \forall\ \phi\in \CS(\BQ_3^\times).$$
The $\GL_2(\BQ_3)$-invariant pairing $(\cdot,\cdot)$ on $\pi_3\times \pi_3$ is given by
\[(\phi_1,\phi_2)=\int_{\BQ_3^\times}\phi_1(x)\ov{\phi_2(x)}d^\times x.\]
Put
$$1_{\nu,n}(x)=\begin{cases}\nu(u),& \textrm{if $ x=u3^{n}$ for $ u\in\BZ_3^{\times}$},\\
                             0,& \mathrm{otherwise},\end{cases}$$
where $\nu$ is a character of $\BZ_3^{\times}$.
Then $\{1_{\nu,n}(x)\}_{\nu,n}$ is an orthogonal basis of $\CS(\BQ_3^\times)$ with respect to the paring $(\cdot,\cdot)$. For $\phi(x)\in\CS(\BQ_3^\times)$, we have the Fourier expansion
$$\phi(x)=\sum_{\nu}\sum_n\widehat{\phi}_n(\nu^{-1})1_{\nu,n},$$
where
$$\widehat{\phi}_n(\nu^{-1})=\int_{\BZ_3^\times}\phi(3^nx)\nu^{-1}(x)d^\times x.$$
The action of $S=\matrixx{0}{1}{-1}{0}$ on $1_{\nu,n}$ can be described as follows:
$$\pi_3\left(\matrixx{0}{1}{-1}{0}\right)1_{\nu,n}=C_{\nu w_0^{-1}}z_0^{-n} 1_{\nu^{-1}w_0,-n+n_{\nu^{-1}}}.$$
Here $w_0=w_{\pi_3}|_{\BZ_3^\times}=\chi_3^{-1}\mid_{\BZ_3^\times}$, $z_0=w_{\pi_3}(3)=1$,
$$C_{\nu}=\epsilon(1/2,\pi_3\otimes \nu^{-1}, \psi),$$
and $n_{\nu}=-\max\{c(\pi_3), 2c(\nu)\}$, where $c(\nu)$ is the conductor of $\nu$ and $c(\pi_3)=2$ or $3$ is the conductor of $\pi_3$, see \cite[Proposition 2.15]{Hu17}. From $S^2=-1$, one can see that
$$n_{\nu}=n_{\nu^{-1}w_0^{-1}},\ \ \ C_{\nu}C_{\nu^{-1}w_0^{-1}}=w_0(-1)z_0^{n_\nu}.$$
For the basics of supercuspidal representations, the readers may refer to \cite{Saito1993}. It is well-known that $1_{1,0}$ is the normalized local new form, and hence, is parallel to $\varphi_3$.

We will use 
the following decomposition of matrices:
\begin{equation}\label{decomposition1}
\matrixx{a}{3^jb}{3^kc}{d}=\matrixx{ac^{-1}-bd^{-1}3^{k+j}}{d^{-1}b3^j}{0}{1}\matrixx{1}{0}{3^k}{1}\matrixx{c}{0}{0}{d}.
\end{equation}
\begin{equation}\matrixx{1}{0}{3^k}{1}=-S\matrixx{1}{-3^k}{0}{1}S.
\end{equation}
Under the embedding $\iota_1$, we have
\begin{eqnarray}\label{matrix}
\iota_1(\sqrt{-3})&=&\matrixx{1-2r}{-2/3}{9\cdot\frac{2(r^2-r+1)}{3}}{2r-1} \nonumber\\
&=&\matrixx{2r-1}{0}{0}{2r-1}\matrixx{\frac{3}{2(r^2-r+1)(2r-1)}}{\frac{-2}{3(2r-1)}}{0}{1}\matrixx{1}{0}{3}{1}\matrixx{\frac{2(r^2-r+1)}{2r-1}}{0}{0}{1}\nonumber\\
&&
\end{eqnarray}
we also have
$$\iota_1(1+y\sqrt{-3})=\matrixx{1+(1-2r)y}{\frac{-2}{3}y}{9\cdot\frac{2(r^2-r+1)}{3}y}{1-(1-2r)y},\quad y\in \BZ/3\BZ.$$
Assume $y\neq 0$, we have the decomposition
\begin{equation}\label{matrix1}
\iota_1(1+y\sqrt{-3})=\matrixx{A_y}{\frac{B_y}{3}}{0}{1}\matrixx{1}{0}{3^2}{1}\matrixx{\frac{2(r^2-r+1)y}{3}}{0}{0}{(2r-1)y+1}
\end{equation}
with 
$$A_y=\frac{3(1+3y^2)}{2y(r^2-r+1)((2r-1)y+1)},\ \ \ B_y=\frac{-2y}{(2r-1)y+1}$$ 
both 3-units. Here we use the fact that 
$\ord_3(r^2-r+1)=1$ since $r\equiv 2\mod 3$ by our condition. 
Moreover if $r\equiv 2\mod 3$, then $r^2-r+1\equiv 3\mod 9$ and $2r-1\equiv 0\mod 3$. As a result, 
\begin{equation}\label{AB}
A_y\equiv 2y\mod 3,\ \ \ B_y\equiv y\mod 3.
\end{equation}
To simplify the computations, we also choose $r$ such that 
$$\ord_3(2r-1)=1.$$ 
This choice will not affect the result since all the $r$ which satisfies the conditions in Theorem \ref{non-torsion} will give the same $K$ points in fact. This can be seen from the fact that $\varphi^c(W(\tau_r))=\varphi^c\lrb{\frac{3(\omega+r)}{N}}$ is independent of the choice of $r$ and $\phi^c\circ\varphi^c(W(\tau_r))=\phi\circ\varphi(\tau_r)$ up to a torsion point by \cite[(8.0.7)]{Yin26}, here $W$ is the Atkin-Lehner matrix.

If $y\neq 0$, then by (\ref{matrix1})

\begin{equation*}
\pi_3(1+y\sqrt{-3})1_{1,0}(x)=\psi\lrb{B_y\frac{x}{3}}1_{1,0}(x)
\end{equation*}
and

\begin{eqnarray*}
\Phi(1+y\sqrt{-3})&=&(\pi_3(1+y\sqrt{-3})1_{1,0},1_{1,0})\\
&=&\int_{\BZ_3^\times}\psi\left(\frac{B_yx}{3}\right)d^\times x\nonumber\\
&=&-\frac{1}{2}.
\end{eqnarray*}
So, 
\begin{equation}\label{mc}
\Phi(1+y\sqrt{-3})=\begin{cases}1,& y=0;\\ -\frac{1}{2},& y=1,2.\end{cases}
\end{equation}

Next, we compute $\Phi(3y+\sqrt{-3})$. First of all, we have

\begin{eqnarray*}
\pi_3\lrb{\matrixx{1}{-3}{0}{1}S}1_{1,0}(x)&=&C_{w_0^{-1}}\psi(-3x)1_{w_0,-2}(x)\\
&=&C_{w_0^{-1}}\lrb{-\frac{1}{2}1_{w_0,-2}(x)+\frac{\sqrt{-3}}{2}1_{\nu_1w_0,-2}(x)}
\end{eqnarray*}
where $\nu_1$ is the unique character of $\BZ_3^\times$ with level $1$ (in fact $\nu_1=w_0$) and in the last equation we use the Fourier expansion of $\psi(-3x)$. Apply another action of $S$, we get
\[\pi_3\lrb{\matrixx{1}{0}{3}{1}}1_{1,0}(x)=-\frac{w_0(-1)}{2}1_{1,0}(x)+\frac{\sqrt{-3}}{2}C_{w_0^{-1}}C_{\nu_1} 1_{\nu_1,0}(x).\]
Then using the decomposition (\ref{matrix}), we see 
\begin{eqnarray*}
\frac{\pi_3(\sqrt{-3})1_{1,0}(x)}{w_0(2r-1)}&=&-\frac{w_0(-1)}{2}\psi\lrb{\frac{-2x}{3(2r-1)}}1_{1,1}(x)+\\
&&\frac{\sqrt{-3}}{2}C_{w_0^{-1}}C_{\nu_1}\nu_1\lrb{\frac{9}{2(r^2-r+1)(2r-1)}}\psi\lrb{\frac{-2x}{3(2r-1)}}1_{\nu_1,1}(x).
\end{eqnarray*}
As a result, we know that for all $y$
\begin{eqnarray}\label{mc1}
\Phi(3y+\sqrt{-3})&=&(\pi_3(\sqrt{-3}(1-y\sqrt{-3}))1_{1,0},1_{1,0})\\
&=&(\pi_3(-3)\pi(1-y\sqrt{-3})1_{1,0},\pi_3(\sqrt{-3})1_{1,0}) \nonumber\\
&=&0. \nonumber
\end{eqnarray}
Combining (\ref{beta}), (\ref{mc}) and (\ref{mc1}), we get
\begin{equation}\label{beta1}
\beta^0(\varphi_3,\varphi_3)=\frac{\Vol(K_3^\times/\BQ_3^\times)}
{6}\lrb{1-\frac{1}{2}(\omega+\omega^2)}=\frac{\Vol(K_3^\times/\BQ_3^\times)}
{4}.
\end{equation}

\begin{proposition}\label{TestingForNew}
If $c(\pi_3)=2$, for $\varphi_3$ being the newform corresponding to $\pi_3$ and $K$ being embedded in $M_2(\BQ)$ as in $(\ref{emb})$, we have
\begin{equation*}
\frac{\beta^0(\varphi'_3,\varphi'_3)}{\beta^0(\varphi_3,\varphi_3)}=4,
\end{equation*}
where $\varphi'_3$ is the admissible test vector.
\end{proposition}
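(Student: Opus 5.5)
Since \eqref{beta1} already gives $\beta^0(\varphi_3,\varphi_3)=\Vol(K_3^\times/\BQ_3^\times)/4$, the proposition reduces to showing that the admissible test vector satisfies
\[\beta^0(\varphi'_3,\varphi'_3)=\Vol(K_3^\times/\BQ_3^\times).\]
My plan is to identify $\varphi'_3$ concretely. By the definition of admissible test vector in \cite{CST14}, $\varphi'_3$ is, up to scalar, the unique vector in $\pi_3$ fixed by $\CR_3^\times$ up to the character $\chi_3^{-1}$ (restricted to $\iota_1(K_3^\times)$) and on which $\iota_1(K_3^\times)$ acts by $\chi_3^{-1}$. Here $\CR_3=\CO_{K_3}+I\bar\CR$ is the order of discriminant $N/p=9$ from the admissible-order subsection. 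Since $\CR_3\supset \iota_1(\CO_{K_3})$ and $\CR_3^\times\supset \BQ_3^\times\iota_1(\CO_{K_3}^\times)$ has finite index in $\BQ_3^\times\iota_1(K_3^\times)\CR_3^\times$, the vector $\varphi'_3$ is a $\chi_3^{-1}$-eigenvector for the torus:
\[\pi_3(\iota_1(t))\varphi'_3=\chi_3^{-1}(t)\varphi'_3 \quad\text{for all } t\in K_3^\times.\]

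Granting this, the matrix coefficient is $\Phi'(t)=\chi_3^{-1}(t)$ for $t\in K_3^\times$. The integrand $\Phi'(\iota_1(t))\chi_3(t)$ is then identically $1$, and the integral equals $\Vol(K_3^\times/\BQ_3^\times)$. Dividing by \eqref{beta1} gives $4$.

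Two points need justification.

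First, existence and uniqueness of a $\chi_3^{-1}$-eigenline. I would use Tunnell's theorem together with Proposition \ref{Tun-Saito}: since $\epsilon(1/2,\pi_3\times\pi_{\chi_3})\chi_3(-1)\eta_3(-1)=+1$ and $\BB_3$ is split, the space $\Hom_{K_3^\times}(\pi_3,\chi_3^{-1})$ is one-dimensional. Because $\pi_3$ is supercuspidal, the Kirillov model realizes this eigenvector as a finite linear combination of the functions $1_{\nu,n}$.

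Second, that this eigenvector is the one the normalization of \cite[Theorem 1.6]{CST14} calls $\varphi'_3$. I would check this by invoking \cite[Proposition 3.8]{CST14} (or the lemma there on $\CR^\times$-invariants): when $\CR_3\cap K_3=\CO_{K_3}$ and $c(\pi_3)$ matches the discriminant of $\CR_3$, the $\CR_3^\times$-twisted-invariant line coincides with the toric eigenline.

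A consistency check is available with the data already computed. Write $\varphi_3=1_{1,0}$ in terms of the $K_3^\times$-isotypic decomposition of $\pi_3$. By \eqref{beta1} and the orthogonality of toric characters, the relation
\[\beta^0(\varphi_3,\varphi_3)=\Vol\cdot\frac{|\langle\varphi_3,\varphi'_3\rangle|^2}{(\varphi_3,\varphi_3)(\varphi'_3,\varphi'_3)}\]
holds. So the claim is equivalent to the projection of $1_{1,0}$ onto the $\chi_3^{-1}$-line having squared norm $1/4$. Indeed, averaging $\Phi(\iota_1(t))\chi_3(t)$ over the six representatives $S\sqcup S'$ reproduces exactly this projection, which gives a direct verification independent of \cite{CST14}'s identification.

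The main obstacle is the second point: matching the normalization of the admissible test vector in \cite{CST14} with the torus eigenvector. In particular one must make sure that the volume normalization of $\beta^0$ used in \eqref{G-Z} agrees with the one in \eqref{beta1}, so that no extra factor such as $\Vol(\CR_3^\times)$-ratios (already absorbed in \eqref{DualPair}) sneaks in.
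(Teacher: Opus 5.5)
Your proposal is correct and is essentially the paper's proof. The paper takes $\beta^0(\varphi'_3,\varphi'_3)=\Vol(\BQ_3^\times\bs K_3^\times)$ as immediate from the definition of the admissible test vector at $3$: it is a $\chi_3^{-1}$-eigenvector for $\iota_1(K_3^\times)$, so the integrand is identically $1$. It then divides by \eqref{beta1}.

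One caution: drop the attempt to characterize $\varphi'_3$ through $\CR_3^\times$-equivariance. Here $\chi_3$ is nontrivial on $1+\sqrt{-3}\,\CO_{K_3}$, and conjugation by $\iota_1(\sqrt{-3})$ normalizes $\CR_3=\CO_{K_3}+3\bar\CR$. It carries $1+3\bar\CR$ onto a subgroup of $\CR_3^\times$ on which the extension of $\chi_3$ through $\CR_3^\times\to(\CO_{K_3}/3\CO_{K_3})^\times$ is nontrivial. So no nonzero vector is both $\CR_3^\times$-equivariant in that sense and a toric eigenvector, and the two lines you hope to match do not coincide. Also, $\BQ_3^\times\not\subset\CR_3^\times$.
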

\begin{proof}
By the definition of the admissible test vector, 
\[\beta^0\lrb{\varphi'_3,\varphi'_3 }=\Vol(\BQ_3^\times\bs K_3^\times).\]
Then the result is clear from (\ref{beta1}).
\end{proof}

\subsection{Local Waldspurger integral at prime $3$ in the case $c(\pi_3)=3$}

In this section we have $c(\pi_3)=3$. By (\ref{conductor}),

\[\Theta(\fq)=\begin{cases}\ov{\lrb{\frac{\varpi^2}{\fq}}_3}\frac{\varpi_\fp}{\sqrt{N\fp}}& p\equiv 4\mod 9,\\
\ov{\lrb{\frac{\varpi}{\fq}}_3}\frac{\varpi_\fp}{\sqrt{N\fp}},& p\equiv 7\mod 9,\end{cases}\]
where $\varpi_\fq$ is a generator of $\fq$ such that $\varpi_\fq\equiv 1\mod 3$ for $\fq\nmid 3\varpi$.

\begin{lemma}\label{character}
Let $\varepsilon\in \CO_\varpi^\times$, then 
$$\Theta_\varpi(\varepsilon)=\begin{cases}\lrb{\frac{\varepsilon}{\varpi}}_3^2,& p\equiv 4\mod 9,\\
\lrb{\frac{\varepsilon}{\varpi}}_3,& p\equiv 7\mod 9.\end{cases}$$
\end{lemma}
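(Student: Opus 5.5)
The plan is to read off $\Theta_\varpi$ on local units from the triviality of the Hecke character $\Theta$ on principal ideles. I will combine this with the explicit formula $\psi(\fq)=\ov{\lrb{\frac{\varpi^i}{\fq}}_3}\varpi_\fq$ and cubic reciprocity. First note which $i$ occurs. We are in the case $c(\pi_3)=3$, i.e. $p^i\equiv 7\mod 9$. So $i=2$ when $p\equiv 4\mod 9$, and $i=1$ when $p\equiv 7\mod 9$; this matches the displayed formula for $\Theta$ above. The claim is therefore uniformly $\Theta_\varpi(\varepsilon)=\lrb{\frac{\varepsilon}{\varpi}}_3^{\,i}$.

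\textbf{Step 1 (reduction to a global element).} By Proposition \ref{LCF} and the shape of $\fm$, the character $\Theta_\varpi$ has conductor exactly $\varpi$. Hence $\Theta_\varpi(\varepsilon)$ depends only on $\varepsilon \mod \varpi$. By the Chinese remainder theorem I choose $\alpha\in\CO_K$ with $\alpha\equiv\varepsilon\mod\varpi$ and $\alpha\equiv 1\mod 3$. Then $\alpha$ is prime to $\fm$, it is primary, and $\Theta_\varpi(\varepsilon)=\Theta_\varpi(\alpha)$. The right-hand side $\lrb{\frac{\varepsilon}{\varpi}}_3$ also depends only on $\varepsilon\mod\varpi$.

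\textbf{Step 2 (product formula).} Since $\Theta$ is trivial on $K^\times$, we have
\[
1=\Theta(\alpha)=\Theta_\infty(\alpha)\,\Theta_3(\alpha)\,\Theta_\varpi(\alpha)\prod_{\fq\nmid 3\varpi}\Theta_\fq(\alpha).
\]
I evaluate each factor as follows.
\begin{itemize}
\item $\Theta_3(\alpha)=1$, because $\alpha\equiv1\mod 3$ and the $3$-part of $\fm$ divides $3$.
\item The unramified product equals $\psi((\alpha))N(\alpha)^{-1/2}$. By multiplicativity of the cubic residue symbol, $\psi((\alpha))=\ov{\lrb{\frac{\varpi^i}{\alpha}}_3}\,\alpha$, since $\alpha\equiv 1\mod 3$ is the normalized generator.
\item $\Theta_\infty(\alpha)=\lrb{\alpha/|\alpha|}^{-1}$. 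This is the archimedean type forced by the normalization $\psi((\alpha))=\alpha$ for $\alpha\equiv 1\mod\fm$.
\end{itemize}
Putting these together gives $\Theta_\varpi(\alpha)=\lrb{\frac{\varpi^i}{\alpha}}_3$.

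\textbf{Step 3 (cubic reciprocity).} Both $\varpi$ and $\alpha$ are primary (congruent to $\pm1\mod 3$) and coprime. Cubic reciprocity then gives $\lrb{\frac{\varpi}{\alpha}}_3=\lrb{\frac{\alpha}{\varpi}}_3=\lrb{\frac{\varepsilon}{\varpi}}_3$. Hence $\Theta_\varpi(\varepsilon)=\lrb{\frac{\varepsilon}{\varpi}}_3^{\,i}$, which is the stated formula in both cases.

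The main obstacle is the bookkeeping of conventions in Step 2: the sign of the archimedean type, and the normalization of the Artin map and local components. A wrong choice would replace the answer by its inverse, i.e. the exponent $i$ by $-i\equiv 3-i$. I would pin this down by checking against the already established fact $w_\pi=\chi^{-1}|_{\BQ}$, and against the convention used for $\Theta_{\bar\varpi}(\bar\varpi)=\bar\varpi/p^{1/2}$ in the computation at $p$. Both were derived with the same normalization $\psi(\fq)=\ov{(\cdot)}\varpi_\fq$, so this check fixes the sign.
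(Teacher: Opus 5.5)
Your proof is correct and follows the paper's argument. Both choose a global element congruent to $\varepsilon$ modulo $\varpi$ and to $1$ modulo $3$, use that $\Theta$ is trivial on principal ideles (the paper phrases this through the map $\kappa$ into the ray class group $C(3\varpi)$), and finish with cubic reciprocity. The only cosmetic difference is that the paper takes the auxiliary element to be a positive rational integer $k$, so $\Theta_\infty(k)=1$ and $k$ is itself the primary generator of $(k)$; your $\alpha\in\CO_K$ instead requires cancelling $\Theta_\infty(\alpha)=|\alpha|/\alpha$ against the factor $\alpha/|\alpha|$ from $\psi((\alpha))N(\alpha)^{-1/2}$, which you do correctly.
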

\begin{proof}Let $\kappa:J^{3\varpi}\ra C(3\varpi)$ be the homomorphism from the ideal class group of $K$ to the idele class group of $K$ which maps a prime ideal $\fq \nmid 3\varpi$ to the class of the idele $(\cdots,1,1,\pi_\fq,1,1,\cdots)$ as in \cite[P481]{Neukirchbook1}. Let $k\in \BN$ such that $k\equiv 1\mod 3$ and $k\equiv \varepsilon\mod \varpi$. Then $$\kappa((k))^{-1}k_\infty^{-1}= \varepsilon_\varpi \mod K^\times U_{3\varpi}$$ where $\varepsilon_\varpi$ is the idele with $\varepsilon$ at place $\varpi$ and $1$ at other places. So if $p\equiv 7\mod 9$, $\Theta_\varpi(\varepsilon)=\Theta(\kappa(k))^{-1}\Theta_\infty(k)^{-1}=\Theta(\kappa(k))^{-1}=\ov{\lrb{\frac{\varpi}{k}}}_3^{-1}=\lrb{\frac{k}{\varpi}}_3=\lrb{\frac{\varepsilon}{\varpi}}_3$ where we use the fact that $k\equiv 1\mod 3$ and the cubic reciprocity law. Similarly, we get the result for the case $p\equiv 4\mod 9$.
\end{proof}

\begin{lem}\label{character1}
We have \[\CO_{K,3}^\times/(1+3\CO_{K,3})=\langle -1\rangle^{\BZ/2\BZ}\times \langle 1+\sqrt{-3} \rangle^{\BZ/3\BZ},\]

\[\Theta_3(-1)=-1,\ \Theta_3(1+\sqrt{-3})=\omega,\ \Theta_3(\sqrt{-3})=\begin{cases}\sqrt{-1}\cdot\lrb{\frac{\sqrt{-3}}{\varpi}}_3,& p\equiv 4\mod 9,\\
\sqrt{-1}\cdot\ov{\lrb{\frac{\sqrt{-3}}{\varpi}}}_3,& p\equiv 7\mod 9.\end{cases}\]
\end{lem}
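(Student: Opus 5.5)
The plan is to use that $\Theta$, viewed as an idelic Hecke character, is trivial on $K^\times$. Evaluating the product formula $\prod_v\Theta_v(\alpha)=1$ at suitable global elements $\alpha\in K^\times$ then isolates $\Theta_3(\alpha)$. The steps are a structural statement about units, the choice of global elements, and a place-by-place evaluation.

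\textbf{Structure of the unit quotient.} $\CO_{K,3}/3\CO_{K,3}$ has $9$ elements, of which $6$ are units, so the quotient has order $6$. The class of $-1$ has order $2$. For $1+\sqrt{-3}$ we compute $(1+\sqrt{-3})^2=-2+2\sqrt{-3}$ and $(1+\sqrt{-3})^3=-8\equiv 1\bmod 3$, so it has order $3$. This gives the stated product decomposition. In the case $c(\pi_3)=3$ the conductor exponent of $\psi$ at $3$ is $2$, i.e.\ $\Theta_3$ is trivial on $1+3\CO_{K,3}$. Hence $\Theta_3$ factors through this quotient and is determined by its values on $-1$, $1+\sqrt{-3}$ and the uniformizer $\sqrt{-3}$.

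\textbf{Choice of global elements.} For $\alpha\in\{-1,\omega,\sqrt{-3}\}$, each $\alpha$ is a unit at every finite place $v\nmid 3\varpi$. Since $\Theta$ is unramified there, $\Theta_v(\alpha)=1$ at those places. The product formula therefore reduces to
\[\Theta_3(\alpha)=\Theta_\infty(\alpha)^{-1}\,\Theta_\varpi(\alpha)^{-1}.\]
The infinity type is forced by $\psi(\fq)=\ov{(\varpi^i/\fq)_3}\,\varpi_\fq$ with $\varpi_\fq\equiv1\bmod 3$, which gives $\Theta_\infty(z)=(z/|z|)^{-1}$ under the convention matching Lemma \ref{character}. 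The $\varpi$-component is given by Lemma \ref{character} as a power of the cubic residue symbol $\lrb{\frac{\cdot}{\varpi}}_3$.

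\textbf{The three values.}
\begin{itemize}
\item[(i)] For $\alpha=-1$: we have $\Theta_\infty(-1)=-1$, and $\Theta_\varpi(-1)=1$ because $-1$ is a cube. Hence $\Theta_3(-1)=-1$.
\item[(ii)] For $1+\sqrt{-3}$: note $1+\sqrt{-3}=-2\omega^2$ with $-2\in1+3\BZ_3$. So modulo $1+3\CO_{K,3}$ we get $\Theta_3(1+\sqrt{-3})=\Theta_3(\omega)^2$. Apply the formula to $\alpha=\omega$, using $\Theta_\infty(\omega)=\omega^{-1}$ and, by Lemma \ref{character} and the supplementary law, $\lrb{\frac{\omega}{\varpi}}_3=\omega^{(p-1)/3}$. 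Then reduce according to $p\bmod 9$, with exponent $2$ when $p\equiv4$ and $1$ when $p\equiv7$. The exponents must combine to give $\Theta_3(1+\sqrt{-3})=\omega$ in both subcases.
\item[(iii)] For $\alpha=\sqrt{-3}$: $\Theta_\infty(\sqrt{-3})=(\sqrt{-1})^{-1}$, and $\Theta_\varpi(\sqrt{-3})$ is $\lrb{\frac{\sqrt{-3}}{\varpi}}_3^{2}$ or $\lrb{\frac{\sqrt{-3}}{\varpi}}_3$ by Lemma \ref{character}. Inverting gives the stated formula, using $\ov{x}=x^{-1}=x^2$ for cube roots of unity.
\end{itemize}

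\textbf{Main obstacle.} I expect the bookkeeping of normalizations to be the hardest part: the sign of the infinity type, the direction of the Artin map implicit in Lemma \ref{character}, and the exponent in the supplementary law for $\lrb{\frac{\omega}{\varpi}}_3$ for $\varpi\equiv1\bmod 3$. Step (ii) must come out exactly as $\omega$ rather than $\omega^2$. I would fix the conventions by first checking consistency with (i), and then with the known conductor: $\Theta_3$ must be nontrivial on $1+\sqrt{-3}$, because $c(\pi_3)=3$ forces level $2$.
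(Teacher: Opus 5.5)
Your proposal is correct and follows essentially the paper's proof: both apply the product formula $\prod_v\Theta_v(\alpha)=1$ to global elements, using $\Theta_\infty(z)=|z|/z$ and $\Theta_\varpi$ from Lemma \ref{character}. The only difference is at $1+\sqrt{-3}$: the paper applies the product formula to $1+\sqrt{-3}$ itself, which brings in $\Theta_2$, and then cancels that factor using $-2$, whereas you use triviality of $\Theta_3$ on $1+3\CO_{K,3}$ to reduce to $\alpha=\omega$. The normalization bookkeeping you were worried about does close: $\Theta_\varpi(\omega)=\omega^2$ for both $p\equiv 4$ and $p\equiv 7 \bmod 9$, so $\Theta_3(\omega)=\omega\cdot\omega=\omega^2$ and $\Theta_3(1+\sqrt{-3})=\Theta_3(\omega)^2=\omega$.
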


\begin{proof}It is well-known that $\Theta_\infty(x)=\frac{||x||}{x}$, by Lemma \ref{character},
$$\Theta_3(-1)=(\Theta_\varpi(-1)\Theta_\infty(-1))^{-1}=-1.$$

By Lemma \ref{character}, $\Theta_\varpi(\omega)=\omega^2$. Since $\Theta_2$ is unramified and $\Theta_3(-2)=\Theta_3(1)=1$, we have
\begin{eqnarray*}
\Theta_3(1+\sqrt{-3})&=&\Theta_\infty(1+\sqrt{-3})^{-1}\Theta_2(1+\sqrt{-3})^{-1}\Theta_\varpi(1+\sqrt{-3})^{-1}\\
&=& -\omega^2\Theta_\varpi(\omega^2)^{-1}\Theta_2(-2)^{-1}\Theta_{\varpi}(-2)^{-1}\\
&=&\omega^2\Theta_\varpi(\omega)\Theta_3(-2)\\
&=&\omega
\end{eqnarray*}
Finally, 
$$\Theta_3(\sqrt{-3})=(\Theta_\infty(\sqrt{-3})\Theta_{\varpi}(\sqrt{-3}))^{-1}=\begin{cases}\sqrt{-1}\cdot\lrb{\frac{\sqrt{-3}}{\varpi}}_3,& p\equiv 4\mod 9,\\
\sqrt{-1}\cdot\ov{\lrb{\frac{\sqrt{-3}}{\varpi}}}_3,& p\equiv 7\mod 9.\end{cases}$$
\end{proof}

\begin{lem} \label{chi}
We have $c(\chi_3)=2$ and $\chi_3$ is given explicitly as follows:
\[\chi_3(-1)=1,\ \chi_3(1+\sqrt{-3})=\omega,\ \chi_3(\sqrt{-3})=\begin{cases}\lrb{\frac{\sqrt{-3}}{\varpi}}_3^2,& p\equiv 4\mod 9,\\
\lrb{\frac{\sqrt{-3}}{\varpi}}_3,& p\equiv 7\mod 9.\end{cases}\]
\end{lem}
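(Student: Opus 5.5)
We will compute $\chi_3$ the same way Lemma \ref{character1} computes $\Theta_3$. We use that $\chi$ is a Hecke character trivial on $K^\times$, unramified outside $3\varpi$, and with trivial archimedean component, since it has finite order and $K$ is imaginary quadratic. For any $x\in K^\times$ this gives
\[\chi_3(x)^{-1}=\prod_{v\neq 3}\chi_v(x),\]
where only $v=\varpi$ and the unramified places dividing $x$ contribute. So we first need $\chi_\varpi$ on $\CO_\varpi^\times$, and then the values of $\chi$ at unramified primes.

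\textbf{Step 1: $\chi_\varpi$ on units.} Argue exactly as in Lemma \ref{character}. Given $\varepsilon\in\CO_\varpi^\times$, choose $k\in\BN$ with $k\equiv 1\mod 9$ (so $k\in 1+3\CO_{K,3}$) and $k\equiv\varepsilon\mod\varpi$. Then
\[\chi_\varpi(\varepsilon)=\chi((k))^{-1}=\lrb{\frac{\varpi^i}{k}}_3^{-1}=\lrb{\frac{k}{\varpi}}_3^{-i}=\lrb{\frac{\varepsilon}{\varpi}}_3^{-i},\]
using cubic reciprocity for primary elements. In the case $c(\pi_3)=3$ we have $p^i\equiv 7\mod 9$; recording $i$ according to $p\bmod 9$ should produce the exponents $2$ versus $1$ in the statement.

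\textbf{Step 2: evaluate $\chi_3$ at the generators.}
\begin{itemize}
\item[(a)] For $-1$: since $\chi$ is cubic, $\chi_3(-1)^2=\chi_3(1)=1$ and also $\chi_3(-1)^3=1$, so $\chi_3(-1)=1$.
\item[(b)] For $\sqrt{-3}$: this element is a unit away from $3$, so $\chi_3(\sqrt{-3})=\chi_\varpi(\sqrt{-3})^{-1}$, which Step 1 computes as a cubic residue symbol $\lrb{\frac{\sqrt{-3}}{\varpi}}_3^{\pm i}$.
\item[(c)] For $1+\sqrt{-3}=-2\omega^2$: this element is a unit away from $2$ and $3$. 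We get
\[\chi_3(1+\sqrt{-3})=\chi_2(-2)^{-1}\chi_\varpi(-2\omega^2)^{-1}.\]
Here $\chi_2(2)=\lrb{\frac{\varpi^i}{2}}_3$, and $\chi_\varpi(\omega^2)=\lrb{\frac{\omega}{\varpi}}_3^{\pm 2i}$, which is $\omega^{\pm 2i(p-1)/3}$. The factors involving $2$ combine through cubic reciprocity ($-2$ is primary), and what remains depends only on $(p-1)/3\bmod 3$, i.e. on $p\bmod 9$. Pairing this with the exponent $i$ forced by $p^i\equiv 7\mod 9$, we should get $\omega$ in all cases. As a cross-check, compare with $\Theta_3(1+\sqrt{-3})=\omega$ from Lemma \ref{character1} together with $\chi\Theta=$ the character of $E_1$.
\end{itemize}

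\textbf{Step 3: conductor.} The character $\chi_3$ is nontrivial on $1+\sqrt{-3}$, which lies in $1+\sqrt{-3}\CO_{K,3}$. So the level is at least $2$. For the upper bound, $\chi_3$ is trivial on $1+3\CO_{K,3}$, because the conductor of $\chi$ is $3\varpi$ by Proposition \ref{LCF}. Hence $c(\chi_3)=2$.

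The main obstacle is Step 2(c): keeping consistent normalizations (Artin map sign, inverse versus non-inverse, primary generators) in the cubic reciprocity and in the supplement for $\lrb{\frac{\omega}{\varpi}}_3$, so that the answer comes out exactly $\omega$ rather than $\omega^2$. I would fix these conventions by matching against Proposition \ref{LCF}(2) and Lemma \ref{character1}.
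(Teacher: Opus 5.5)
Your proof is correct and is essentially the paper's approach. The paper states Lemma \ref{chi} without proof, and your argument is the same product-formula computation it uses for $\Theta$ in Lemmas \ref{character} and \ref{character1}, here evaluating $\chi$ at the global elements $-1$, $\sqrt{-3}$ and $-2\omega^2$ with $\chi_\varpi(\varepsilon)=\lrb{\frac{\varepsilon}{\varpi}}_3^{-i}$. Your Step 2(c) does close up: it gives $\chi_3(1+\sqrt{-3})=\lrb{\frac{\omega}{\varpi}}_3^{2i}=\omega^{2i(p-1)/3}=\omega$, because $p^i\equiv 7\mod 9$ forces $i(p-1)/3\equiv 2\mod 3$.

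For the sign normalization, calibrate against Lemma \ref{character1}, or equivalently against the requirement from Proposition \ref{Lfunction} that $\chi\Theta$ be unramified at $\varpi$; both agree with your Step 1. Do not calibrate against Proposition \ref{LCF}(2), since $\omega_\fp$ is not defined in this paper.
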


Let $\chi_3^c$ be the character on $K_3$ such that $\chi_3^c(x)=\chi_3(\bar{x})$.

\begin{coro}
\label{thetachivalue}
The local character $\Theta_3\chi_3^c$ has level one and is given explicitly by
\[\Theta_3\chi_3^c(-1)=-1,\ \quad \Theta_3\chi_3^c(1+\sqrt{-3})=1,\ \quad \Theta_3\chi_3^c(\sqrt{-3})=\sqrt{-1}.\]
\end{coro}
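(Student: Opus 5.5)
The plan is to evaluate $\Theta_3\chi_3^c$ directly on the generators of $K_3^\times$ modulo $1+3\CO_{K,3}$, namely $-1$, $1+\sqrt{-3}$ and $\sqrt{-3}$. The ingredients are Lemma \ref{character1} for $\Theta_3$ and Lemma \ref{chi} for $\chi_3$, together with the identity $\chi_3^c(x)=\chi_3(\bar x)$.

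First I will justify that evaluating on these generators suffices. Lemma \ref{chi} gives $c(\chi_3)=2$. Lemma \ref{character1} implicitly says the same for $\Theta_3$, since it describes $\Theta_3$ on $\CO_{K,3}^\times/(1+3\CO_{K,3})$. Hence both characters are trivial on $1+3\CO_{K,3}=1+\sqrt{-3}^2\CO_{K,3}$. Since $1+3\CO_{K,3}$ is stable under conjugation, $\chi_3^c$ is trivial on it as well. So $\Theta_3\chi_3^c$ factors through $K_3^\times/(1+3\CO_{K,3})$, which is generated by $\sqrt{-3}$ together with the group $\langle -1\rangle\times\langle 1+\sqrt{-3}\rangle$ of Lemma \ref{character1}.

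Next I compute the values one at a time.
\begin{itemize}
\item For $-1$: since $\overline{-1}=-1$, we get $\Theta_3\chi_3^c(-1)=\Theta_3(-1)\chi_3(-1)=(-1)\cdot 1=-1$.
\item For $1+\sqrt{-3}$: the conjugate is $1-\sqrt{-3}=4/(1+\sqrt{-3})$. Because $4\in 1+3\BZ_3\subset 1+3\CO_{K,3}$ and $\chi_3$ has level $2$, we have $\chi_3(4)=1$. Therefore $\chi_3(1-\sqrt{-3})=\chi_3(1+\sqrt{-3})^{-1}=\omega^2$, and $\Theta_3\chi_3^c(1+\sqrt{-3})=\omega\cdot\omega^2=1$.
\item For $\sqrt{-3}$: here $\overline{\sqrt{-3}}=-\sqrt{-3}$, so $\chi_3^c(\sqrt{-3})=\chi_3(-1)\chi_3(\sqrt{-3})=\chi_3(\sqrt{-3})$. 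Write $x=\lrb{\frac{\sqrt{-3}}{\varpi}}_3$, a cube root of unity. When $p\equiv 4\mod 9$, the two lemmas give $\sqrt{-1}\,x\cdot x^2=\sqrt{-1}$. When $p\equiv 7\mod 9$, they give $\sqrt{-1}\,\bar x\cdot x=\sqrt{-1}$. The result is $\sqrt{-1}$ in both cases.
\end{itemize}
The one point needing care is to make sure the case labels of Lemma \ref{character1} and Lemma \ref{chi} refer to the same congruence class of $p^i$, so that the cube roots of unity really cancel; the computation itself is routine.

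Finally, for the level, the computation above shows that $\Theta_3\chi_3^c$ is trivial on $1+\sqrt{-3}$ and on $1+3\CO_{K,3}$. These generate $1+\sqrt{-3}\CO_{K,3}$, because that group maps onto the $\BZ/3$-factor $\langle 1+\sqrt{-3}\rangle$ of Lemma \ref{character1}. So the character is trivial on $1+\sqrt{-3}\CO_{K,3}$. It is nontrivial on $\CO_{K,3}^\times$ because its value at $-1$ is $-1$. Hence it has level exactly one.
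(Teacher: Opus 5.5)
Your proposal is correct and matches the argument the paper intends. The paper states this corollary without proof, as an immediate consequence of Lemmas \ref{character1} and \ref{chi}, obtained exactly as you do it: evaluate on $-1$, $1+\sqrt{-3}$ and $\sqrt{-3}$ using $\chi_3^c(x)=\chi_3(\bar x)$; you handle the conjugates ($1-\sqrt{-3}=4/(1+\sqrt{-3})$ with $4\in 1+3\BZ_3$, and $\overline{\sqrt{-3}}=-\sqrt{-3}$ with $\chi_3(-1)=1$) and the level correctly.
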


Let $\theta_3$ be the $3$-adic character which parametrizes the supercuspidal representation $\pi_3$ via the compact-induction construction, then $c(\theta_3)=2$ see \cite[section 2.2]{HSY2}. The test vector issue for Waldspurger's local period integral is closely related to $c(\theta_3\chi_3^c)$ or $c(\theta_3\chi_3)$. We can work out these  by using Lemma \ref{character1}, \ref{chi} and Corollary \ref{thetachivalue}, and the relation between $\theta_3$ and $\Theta_3$ in \cite[Theorem 2.10]{HSY2}.

Let $\psi$ be the additive character such that $\psi(x)=e^{2\pi \sqrt{-1} \iota(x)}$ where $\iota:\BQ_3\rightarrow \BQ_3/\BZ_3 \subset \BQ/\BZ$ is the map given by $x\mapsto -x\mod \BZ_3$. 
For any additive character $\xi$ of $\BQ_3$,  the Langlands $\lambda$-function of the extension $K_3/\BQ_3$ is $\lambda_{K_3/\BQ_3}(\xi):=\epsilon(\Ind_{G_{\BQ_3}}^{G_{K_3}}(1_{K_3}), \xi)$ where $1_{K_3}$ is trivial representation of the absolute Galois group $G_{K_3}$ and $\epsilon(\cdot,\cdot)$ means the local $\epsilon$-factor, see \cite{Langlands}. 
Now we prove the following key lemma. 
\begin{lem}\label{Cor:AllnecessaryFormulationForThetaChi}
We have $\theta_3\chi_3^c$ is the trivial character.
\end{lem}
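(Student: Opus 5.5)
We will pin down $\theta_3$ explicitly from $\Theta_3$ using \cite[Theorem 2.10]{HSY2}, and then check that $\theta_3\chi_3^c$ is trivial on a set of generators of $K_3^\times$. The group $K_3^\times$ is generated by $-1$, $1+\sqrt{-3}$, $\sqrt{-3}$, together with $1+3\CO_{K,3}$. By Lemma \ref{character1}, $\CO_{K,3}^\times/(1+3\CO_{K,3})=\langle -1\rangle\times\langle 1+\sqrt{-3}\rangle$. Since $c(\theta_3)=2$ and $c(\chi_3)=2$ (Lemma \ref{chi}), both characters are trivial on $1+3\CO_{K,3}=1+\sqrt{-3}^2\CO_{K,3}$. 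So it suffices to evaluate $\theta_3\chi_3^c$ at the three elements $-1$, $1+\sqrt{-3}$ and $\sqrt{-3}$.

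Here $\pi_3$ is the dihedral supercuspidal $\pi(\Ind\Theta_3)$ for the ramified extension $K_3/\BQ_3$. \cite[Theorem 2.10]{HSY2} should express the compact-induction parameter as a twist $\theta_3=\Theta_3\cdot\mu$, where $\mu$ is a tamely ramified correction character. This $\mu$ is trivial on $1+\sqrt{-3}\CO_{K,3}$ and has order dividing $4$. Its value on the uniformizer $\sqrt{-3}$ is governed by the Langlands constant $\lambda_{K_3/\BQ_3}(\psi)$, or equivalently by $\eta_3$ composed with the norm and a Gauss-sum sign.

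We then compute $\lambda_{K_3/\BQ_3}(\psi)$ for our normalization of $\psi$, which uses $x\mapsto -x \bmod \BZ_3$. The character $\eta_3$ has conductor $3$, so $\lambda$ is essentially a normalized quadratic Gauss sum modulo $3$, namely $\pm\sqrt{-1}$, with the sign fixed by the convention on $\psi$. Once we have $\mu$, we use
\[\Theta_3\chi_3^c(-1)=-1,\quad \Theta_3\chi_3^c(1+\sqrt{-3})=1,\quad \Theta_3\chi_3^c(\sqrt{-3})=\sqrt{-1}\]
from Corollary \ref{thetachivalue}, and multiply by $\mu$. We expect $\mu(-1)=-1$ (indeed $\mu|_{\CO^\times}$ should be $\eta_3\circ N$ restricted suitably), $\mu(1+\sqrt{-3})=1$ (since $\mu$ is tame), and $\mu(\sqrt{-3})=-\sqrt{-1}=\lambda^{-1}$. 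The product is then identically $1$.

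The main obstacle is getting the exact normalization of the correction character in \cite[Theorem 2.10]{HSY2}. Two things must be settled there: whether it is $\theta_3=\Theta_3\mu$ or $\Theta_3\mu^{-1}$, and whether $\mu(\sqrt{-3})$ equals $\lambda$ or $\bar\lambda$ relative to our $\psi$. A sign error at $\sqrt{-3}$ would give $-1$ instead of $1$. To cross-check, we will compare central characters: $\theta_3|_{\BQ_3^\times}\cdot\eta_3$ must equal $w_{\pi_3}=\chi_3^{-1}|_{\BQ_3^\times}$. This forces $\theta_3(-3)=\theta_3(\sqrt{-3})^2\cdot\theta_3(-1)$ to be consistent, and pins $\theta_3(\sqrt{-3})$ up to the sign that $\lambda$ then determines. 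We can also confirm the value at $\sqrt{-3}$ via the epsilon factor $\epsilon(1/2,\pi_3,\psi)=\lambda\cdot\epsilon(1/2,\Theta_3,\psi_{K_3})$ and its relation to the Atkin–Lehner sign.
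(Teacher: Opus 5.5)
Your strategy is the same as the paper's: write $\theta_3=\Theta_3\Delta$ using the correction character of \cite[Theorem 2.10]{HSY2}, then test triviality on $-1$, $1+\sqrt{-3}$ and $\sqrt{-3}$ against Corollary \ref{thetachivalue}. There is, however, a genuine gap.

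\textbf{The value at $\sqrt{-3}$ is not derived.} The values at $-1$ and $1+\sqrt{-3}$ are routine. The value $\Delta(\sqrt{-3})$ is the entire content of the lemma, and your proposal simply picks $-\sqrt{-1}$ because it makes the product equal to $1$. The sign matters: with $\Delta(\sqrt{-3})=+\sqrt{-1}$, the character $\theta_3\chi_3^c$ would be the nontrivial unramified character $x\mapsto(-1)^{\ord_{\sqrt{-3}}(x)}$.

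\textbf{Your tentative formula has the wrong sign.} You suggest $\mu(\sqrt{-3})=\lambda^{-1}$, but the paper takes from \cite[Theorem 2.10]{HSY2}
$\Delta(\sqrt{-3})=\eta(\sqrt{-3}\,\alpha_{\Theta_3})\,\lambda_{K_3/\BQ_3}(\psi)$,
where $\alpha_{\Theta_3}$ is defined by $\Theta_3(1+x)=\psi_{K_3}(\alpha_{\Theta_3}x)$ for $\ord_{\sqrt{-3}}(x)\geq 1$. So the value depends on $\Theta_3$ through $\alpha_{\Theta_3}$, and it is $\lambda$ itself, not $\lambda^{-1}$, that enters. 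The two computations you are missing are:
\begin{itemize}
\item[(i)] $\alpha_{\Theta_3}=1/(3\sqrt{-3})$. This follows from $\psi(\Tr_{K_3/\BQ_3}(1/3))=\psi(2/3)=\omega=\Theta_3(1+\sqrt{-3})$. Hence $\eta(\sqrt{-3}\,\alpha_{\Theta_3})=\eta(1/3)=1$, because $3=\Norm(\sqrt{-3})$.
\item[(ii)] $\lambda_{K_3/\BQ_3}(\psi)=\eta(3)\lambda_{K_3/\BQ_3}(\psi')$ with $\psi'(x)=\psi(x/3)$ of level one, and $\lambda_{K_3/\BQ_3}(\psi')=\tau(\eta,\psi')/\sqrt{3}=(\omega^2-\omega)/\sqrt{3}=-\sqrt{-1}$.
\end{itemize}
Together these give $\Delta(\sqrt{-3})=\lambda=-\sqrt{-1}$, and the lemma follows. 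Your $\lambda^{-1}=+\sqrt{-1}$ would instead give $\theta_3\chi_3^c(\sqrt{-3})=-1$, the wrong conclusion.

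\textbf{Your cross-checks cannot close the gap, and two of them are misstated.}
\begin{itemize}
\item The central character only yields $\Delta(\sqrt{-3})^2=\eta(-3)=-1$, as you concede, so it cannot fix the sign.
\item The remark that $\mu|_{\CO^\times}$ is ``$\eta_3\circ\Norm$ restricted suitably'' cannot be right, since $\eta_3\circ\Norm$ is identically trivial on $K_3^\times$. The correct property is $\Delta|_{\BQ_3^\times}=\eta_3$, which is what gives $\Delta(-1)=-1$.
\item Since $w_{\pi_3}=\eta_3\,\Theta_3|_{\BQ_3^\times}$, the relation to test is $\theta_3|_{\BQ_3^\times}=w_{\pi_3}$. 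Your version $\theta_3|_{\BQ_3^\times}\eta_3=w_{\pi_3}$, together with $w_{\pi_3}(-1)=\chi_3^{-1}(-1)=1$, would force $\theta_3(-1)=-1$. That contradicts your own values $\Theta_3(-1)\mu(-1)=(-1)(-1)=1$.
\item The epsilon-factor check is only sketched. Carrying it out amounts to re-deriving the formula for $\Delta(\sqrt{-3})$ above.
\end{itemize}
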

\begin{proof}
Let $\psi_{K_3}(x)=\psi\circ \Tr_{K_3/\BQ_3}(x)$, be the additive character of $K_3$. Then $\alpha_{\Theta_3}=1/(3\sqrt{-3})$ satisfies $\Theta_3(1+x)=\psi_{K_3}(\alpha_{\Theta_3}x)$ for any $x$ satisfying $\ord_{\sqrt{-3}}(x)\geq 1$. Now let $\eta$ be the quadratic character associated with the quadratic field extension $K_3/\BQ_3$.
Then by \cite[Proposition 34.3]{BushnellHenniart:06a}, $$\lambda_{K_3/\BQ_3}(\psi')=\tau(\eta,\psi')/\sqrt{3}=-i,$$ here $\tau(\eta,\psi')$ is the Gauss sum and $\psi'(x)=\psi(\frac{x}{3})$ is the additive character of level one. By \cite[Lemma 5.1]{Langlands}, $$\lambda_{K_3/\BQ_3}(\psi)=\eta(3)\lambda_{K_3/\BQ_3}(\psi')=-\sqrt{-1}.$$ 
Then define $\Delta_{\theta_3}$ to be the unique level one character of $K_3$ such that $\Delta_{\theta_3}|_{\BZ_3^\times}=\eta$ and
\[\Delta_{\theta_3}(\sqrt{-3})=\eta((\sqrt{-3})\alpha_{\Theta_3})\lambda_{K_3/\BQ_3}(\psi)=-\sqrt{-1}.\]
Then by \cite[Theorem 2.10]{HSY2}, $\theta_3=\Theta_3\Delta_{\theta_3}$. By Corollary \ref{thetachivalue} we can easily know that $\theta_3\chi_3^c$ is trivial.
\end{proof}

In our case, $c(\theta_3)=c(\Theta_3)=c(\chi_3)=2$, let $n=(c(\pi_3)-1)/2=1$. For the supercuspidal representation $\pi_3$ of $\GL_2(\BQ_3)$, 
the Kirillov model $\sK(\pi_3,\psi)$ is the unique realization of $\pi_3$ on the Schwartz function space $\CS(\BQ_3^\times)$ such that
\begin{equation}\label{kiri}
\pi_3\left(\matrixx{a}{b}{0}{1}\right)\varphi(y)=\psi( by)\varphi(ay),\quad \varphi\in \CS(\BQ_3^\times).
\end{equation}
By \cite[Lemma 2.11]{HSY2}, we have the minimal vector $\varphi_0=\Char(3^{-2}(1+\BZ_3^\times))$ in the Kirillov model. For more details on minimal vectors, we refer to \cite[2.5]{HSY2}.  Recall under the embedding $\iota_1$, $K$ is embedded into $\rm{M}_2(\BQ)$ such that:
\begin{equation}\label{emb}
\sqrt{-3}\mapsto \matrixx{a}{3^{-1}b}{3^2c}{-a}:=\matrixx{1-2r}{-2/3}{9\cdot\frac{2(r^2-r+1)}{3}}{2r-1}
\end{equation}
where $b, c\in \BQ\cap \BZ_3^\times$.

\begin{prop}\label{Prop:TestingForNew}
If $c(\pi_3)=3$, for $\varphi_3$ being the newform corresponding to $\pi_3$ and $K$ being embedded in $M_2(\BQ)$ as in $(\ref{emb})$, we have
\begin{equation*}
\frac{\beta^0(\varphi'_3,\varphi'_3)}{\beta^0(\varphi_3,\varphi_3)}=2,
\end{equation*}
where $\varphi'_3$ is the admissible test vector.
\end{prop}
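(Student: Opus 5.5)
By the definition of the admissible test vector we again have $\beta^0(\varphi'_3,\varphi'_3)=\Vol(\BQ_3^\times\bs K_3^\times)$, as in the proof of Proposition \ref{TestingForNew}. So the claim is equivalent to
\[
\beta^0(\varphi_3,\varphi_3)=\tfrac12\Vol(K_3^\times/\BQ_3^\times).
\]
To prove this I would compute the local matrix coefficient of the newform against $\chi_3$. This time I would not use a Fourier expansion of $1_{1,0}$. Instead I would use the minimal vector $\varphi_0=\Char(3^{-2}(1+\BZ_3^\times))$ from \cite[Lemma 2.11]{HSY2}, which is where Lemma \ref{Cor:AllnecessaryFormulationForThetaChi} enters.

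\textbf{Step 1: the minimal vector.} In the Kirillov model (\ref{kiri}), the newform is $1_{1,0}$. The minimal vector $\varphi_0$ is supported on $3^{-2}(1+3\BZ_3)$, so it is a translate of a level-one piece. I would express
\[
1_{1,0}=\sum_{\nu}c_\nu\,\pi_3(g_\nu)\varphi_0
\]
for suitable unipotent and diagonal elements $g_\nu$. Concretely, writing $\Char(1+3\BZ_3)$ in terms of the characters $1_{\nu,0}$ and applying $\pi_3\matrixx{3^{2}}{0}{0}{1}$ relates $\varphi_0$ to $\frac12(1_{1,0}+1_{\nu_1,0})$ shifted to valuation $-2$. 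Following \cite[2.5]{HSY2}, the key fact is that $\varphi_0$ is an eigenvector for an embedded torus $\iota'(\CO_{K_3}^\times)$, with eigencharacter $\theta_3$ or $\theta_3^{-1}$ up to central twist. Here $\iota'$ is a specific embedding of level matching $\theta_3$.

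\textbf{Step 2: conjugate the torus.} I would conjugate our embedding (\ref{emb}), $\sqrt{-3}\mapsto\matrixx{a}{3^{-1}b}{3^2c}{-a}$, to the model embedding $\iota'$ by a diagonal matrix $\matrixx{3^{k}}{0}{0}{1}$ followed by an element of $K_1(27)$. This is possible because $b,c$ are 3-units and $\ord_3(a)=1$, by our choice $\ord_3(2r-1)=1$. The newform is invariant under that element of $K_1(27)$, so the integral for $\varphi_3$ becomes
\[
\int_{K_3^\times/\BQ_3^\times}\big(\pi_3(t)v,v\big)\chi_3(t)\,dt
\]
with $v=\pi_3\matrixx{3^{k}}{0}{0}{1}1_{1,0}$ and $t$ in the model torus.

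\textbf{Step 3: apply Lemma \ref{Cor:AllnecessaryFormulationForThetaChi}.} Decompose $v$ into torus-eigencomponents. Only the component on which the torus acts by $\chi_3^{-1}$ contributes to the integral, and its contribution is $\Vol\cdot\|v_{\chi^{-1}}\|^2/\|v\|^2$. Since $\theta_3\chi_3^c$ is trivial, the $\chi_3^{-1}$-eigenline is exactly the line of the minimal vector (possibly its conjugate $\pi_3\matrixx{-1}{0}{0}{1}\varphi_0$). So
\[
\beta^0(\varphi_3,\varphi_3)=\Vol(K_3^\times/\BQ_3^\times)\cdot\frac{|(v,\varphi_0)|^2}{(v,v)(\varphi_0,\varphi_0)}.
\]

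\textbf{Step 4: the inner product.} This is a direct Kirillov-model computation: $v$ is supported on $3^{k}\BZ_3^\times$, and $\varphi_0$ is $\Char(1+3\BZ_3)$ at valuation $-2$. If $k=-2$, then
\[
|(v,\varphi_0)|^2=\Vol(1+3\BZ_3)^2=\tfrac14,\qquad (\varphi_0,\varphi_0)=\tfrac12,\qquad (v,v)=1,
\]
which gives the ratio $\tfrac12$, as needed.

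\textbf{Main obstacle.} The hard part is Step 2 together with Step 3: pinning down the conjugating element precisely, and verifying that $\varphi_0$ (rather than another level-one translate) is the $\chi_3^{-1}$-eigenvector for our specific torus. If that identification fails, I would fall back on the brute-force method of the $c(\pi_3)=2$ case. That means computing $\Phi(\iota_1(t))$ over representatives of $K_3^\times/\BQ_3^\times(1+\sqrt{-3}^2\CO_{K,3})$ using the decomposition (\ref{decomposition1}), the $S$-action formula from \cite[Proposition 2.15]{Hu17}, and the explicit values in Lemma \ref{character1} and Lemma \ref{chi}.
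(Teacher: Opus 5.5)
Your plan has the same skeleton as the paper's proof. First, $\beta^0(\varphi'_3,\varphi'_3)=\Vol(\BQ_3^\times\bs K_3^\times)$. Second, $\beta^0(\varphi_3,\varphi_3)$ is half of that, because a translate of the newform equals $\frac{1}{\sqrt{q-1}}$ times the sum of the $q-1=2$ orthogonal translates $\varphi_{0,\pm1}=\pi_3\matrixx{\pm1}{0}{0}{1}\varphi_0$ of the minimal vector. By Lemma \ref{Cor:AllnecessaryFormulationForThetaChi}, exactly one of these translates is the $\chi_3^{-1}$-eigenvector. Your Steps 3--4 compute this differently: you use multiplicity one of $\chi_3^{-1}$ in $\pi_3|_{K_3^\times}$, project onto the eigenline, and evaluate the Kirillov inner product, getting $\Vol(1+3\BZ_3)=\tfrac12$. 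That is a correct, more self-contained replacement for the paper's use of \cite[Proposition A.4]{Yin1}.

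The step that fails as stated is Step 2. Take the model torus to be the standard embedding \eqref{standardemb}, for which the minimal vector of \cite{HSY2} is an eigenvector. Every conjugator from \eqref{emb} to it has the form $\iota(s)g$, where $g=\matrixx{-3c}{a/3}{0}{1}$ and $\iota(s)=\matrixx{u}{w}{-3w}{u}$. No such element lies in $\matrixx{3^k}{0}{0}{1}K_1(27)$. Indeed, the determinant of $\matrixx{3^{-k}}{0}{0}{1}\iota(s)g$ forces $k=1$. The bottom row $(9cw,\,u-aw)\equiv(0,1) \bmod 27$ then forces $3\mid w$ and $u\in\BZ_3^\times$, which makes the upper-right entry $(ua/3+w)/3$ non-integral.

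Concretely, $g=\matrixx{1}{a/3}{0}{1}\matrixx{-3c}{0}{0}{1}$. Moving the unipotent to the right of the diagonal gives $\matrixx{1}{-a/(9c)}{0}{1}$, which is not in $K_1(27)$ and so is not absorbed by the newform. If you instead define $\iota'$ as $\matrixx{3^k}{0}{0}{1}\iota_1\matrixx{3^k}{0}{0}{1}^{-1}$, Step 2 becomes trivial, but Step 3 then needs exactly the missing fact below.

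The missing fact is how the paper treats the unipotent factor: it keeps $\matrixx{1}{a/3}{0}{1}$ on the left. This element lies in the compact open group under which the minimal vector transforms by a character. Hence it multiplies each $\varphi_{0,x}$ by a root of unity, which is the paper's $\mu$. Combined with \cite[Corollary 2.12]{HSY2}, this gives $\pi_3(g)\varphi_3=\frac{1}{\sqrt2}\sum_{x=\pm1}\mu_x\varphi_{0,x}$.

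Only at this point does your Step 3 apply. Both $\varphi_{0,\pm1}$ are eigenvectors of the standard torus, with Galois-conjugate characters since $\matrixx{-1}{0}{0}{1}$ normalizes it. The projection onto the $\chi_3^{-1}$-line therefore has squared norm $\tfrac12$.

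Similarly, your exponent $k$ is determined by the diagonal part of $g$; you cannot choose it to match the support of $\varphi_0$. That the supports match is exactly what \cite[Corollary 2.12]{HSY2} provides.
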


\begin{proof}
We may assume $\varphi_3$ to be $L^2$-normalized and use the notation
\begin{equation}
\BetaI{\varphi,\varphi'}:=\int\limits_{t\in \BQ_3^\times\backslash K_3^\times}(\pi(t)\varphi,\varphi')\chi_3(t)dt
\end{equation}
for the usual local integral. So under our choice of $\varphi_3$, $\beta^0_3(\varphi_3,\varphi_3)=\BetaI{\varphi_3,\varphi_3}$.

To evaluate $\varphi_3$ for the embedding in \eqref{emb} is equivalent to using the standard embedding 
\begin{equation}\label{standardemb}\sqrt{-3}\mapsto \matrixx{0}{1}{-3}{0}\end{equation}
of $K_3$ for the corresponding translation of the newform.
In particular, the embedding in \eqref{emb} is conjugate to the standard embedding by the following 
\begin{equation}
    \matrixx{a}{3^{-1}b}{3^2c}{-a}=\matrixx{-3c}{a/3}{0}{1}^{-1}\matrixx{0}{1}{-3}{0}\matrixx{-3c}{a/3}{0}{1},
\end{equation}
where we have used the fact that $\mathrm{Nm}(\sqrt{-3})=-a^2-3bc=3$.
Thus  we have
\begin{align}
\beta^0(\varphi_3,\varphi_3)&=\int\limits_{\BQ_3^\times\backslash K_3^\times}\left(\pi_3\lrb{\matrixx{-3c}{a/3}{0}{1}^{-1}t\matrixx{-3}{a/3}{0}{1}}\varphi_3,\varphi_3\right)\chi_3(t)dt\\
&=\int\limits_{\BQ_3^\times\backslash K_3^\times}\left(\pi_3\lrb{t\matrixx{-3c}{a/3}{0}{1}}\varphi_3,\pi_3\lrb{\matrixx{-3c}{a/3}{0}{1}\varphi_3}\right)\chi_3(t)dt,\label{standardintegral}
\end{align}
where $K_3$ is embedded in $M_2(\BQ_3)$ as in (\ref{standardemb}). By definition, the integral in (\ref{standardintegral}) is just
$$\BetaI{\pi_3\lrb{\matrixx{-3c}{a/3}{0}{1}}\varphi_3,\pi_3\lrb{\matrixx{-3c}{a/3}{0}{1}}\varphi_3}$$
for the standard embedding.
Note that by \cite[Corollary 2.12]{HSY2},
\begin{equation}\label{new-min}
\pi_3\lrb{\matrixx{-3c}{a/3}{0}{1}}\varphi_3
=\frac{1}{\sqrt{(q-1)}}\sum\limits_{x\in (\BZ_3/3\BZ_3)^\times}\pi_3\lrb{\matrixx{1}{a/3}{0}{1}\matrixx{x}{0}{0}{1}}\varphi_0
\end{equation}
where $\lceil\cdot\rceil$ means the smallest integer not less than the given number and $e=2$ is the ramification index of $K_3/\BQ_3$. Denote 
$$\varphi_{a, x}=\pi_3\lrb{\matrixx{1}{a/3}{0}{1}\matrixx{x}{0}{0}{1}}\varphi_0.$$
In order to compute $\beta^0_3(\varphi_3,\varphi_3)$, we just need to consider $\{\varphi_{a, x'},\varphi_{a, x''}\}$ for $x',x''\in (\BZ_3/3\BZ_3)^\times$.

By (\ref{kiri}), $\varphi_{a, x}(y)=\psi((a/3)y)\varphi_0(xy)$.
Then $\{\varphi_{a, x'},\varphi_{a, x''}\}=\mu\{\varphi_{0, x'},\varphi_{0, x''}\}$ for some roots of unity $\mu$. If $x'=x''$, $\mu=1$ since we take a dual pair. By the $c(\theta_3\chi_3^c)=0$ case in \cite[Section 2.4]{HSY2}, we have a unique $x\mod 3$ for which $\BetaI{\varphi_{0,x},\varphi_{0,x}}=\Vol(\BQ_3^\times\bs K_3^\times)$ is nonvanishing.  By Proposition \cite[Proposition A.4]{Yin1}, we have
\begin{equation}
\beta^0\lrb{\varphi_3,\varphi_3 }=\frac{1}{(q-1)q^{\lceil \frac{c(\theta_3)}{2 e_\BL}\rceil-1}} \BetaI{\varphi_{0,x},\varphi_{0,x}}=\frac{1}{2}\Vol(\BQ_3^\times\bs K_3^\times).
\end{equation}
By the definition of the admissible test vector, 
\[\beta^0\lrb{\varphi'_3,\varphi'_3 }=\Vol(\BQ_3^\times\bs K_3^\times).\]
Then the result is clear.
\end{proof}

\bibliographystyle{alpha}
\bibliography{reference}
\end{document}